\definecolor{navyblue}{RGB}{0, 0, 128}
\newcommand{\NN}{\mathbb{N}}
\newcommand{\ZZ}{\mathbb{Z}}
\newcommand{\QQ}{\mathbb{Q}}
\newcommand{\TT}{\mathbb{T}}
\newcommand{\nc}{\newcommand}
\nc{\inv}{^{-1}}
\newcommand{\id}{\operatorname{id}}
\DeclareMathOperator{\Aut}{Aut}
\DeclareMathOperator{\tp}{tp}
\DeclareMathOperator{\Fix}{Fix}
\DeclareMathOperator{\po}{<_{po}}
\DeclareMathOperator{\npo}{\not<_{po}}
\newcommand{\indep}[1][]{%
  \mathrel{
    \mathop{
      \vcenter{
        \hbox{\oalign{\noalign{\kern-.3ex}\hfil$\vert$\hfil\cr
              \noalign{\kern-.7ex}
              $\smile$\cr\noalign{\kern-.3ex}}}
      }
    }\displaylimits_{#1}
  }
}
\newenvironment{enumerate-(a)}{\begin{enumerate}[label={\upshape (\alph*)}, leftmargin=2pc]}{\end{enumerate}}
\newenvironment{enumerate-(a)-r}{\begin{enumerate}[label={\upshape (\alph*)}, leftmargin=2pc,resume]}{\end{enumerate}}
\newenvironment{enumerate-(a)-5}{\begin{enumerate}[label={\upshape (\alph*)}, leftmargin=2pc,start=5]}{\end{enumerate}}
\newenvironment{enumerate-(A)}{\begin{enumerate}[label={\upshape (\Alph*)}, leftmargin=2pc]}{\end{enumerate}}
\newenvironment{enumerate-(A)-r}{\begin{enumerate}[label={\upshape (\Alph*)}, leftmargin=2pc,resume]}{\end{enumerate}}
\newenvironment{enumerate-(i)}{\begin{enumerate}[label={\upshape (\roman*)}, leftmargin=2pc]}{\end{enumerate}}
\newenvironment{enumerate-(i)-r}{\begin{enumerate}[label={\upshape (\roman*)}, leftmargin=2pc,resume]}{\end{enumerate}}
\newenvironment{enumerate-(I)}{\begin{enumerate}[label={\upshape (\Roman*)}, leftmargin=2pc]}{\end{enumerate}}
\newenvironment{enumerate-(I)-r}{\begin{enumerate}[label={\upshape (\Roman*)}, leftmargin=2pc,resume]}{\end{enumerate}}
\newenvironment{enumerate-(1)}{\begin{enumerate}[label={\upshape (\arabic*)}, leftmargin=2pc]}{\end{enumerate}}
\newenvironment{enumerate-(1)-r}{\begin{enumerate}[label={\upshape (\arabic*)}, leftmargin=2pc,resume]}{\end{enumerate}}
\newenvironment{itemizenew}{\begin{itemize}[leftmargin=2pc]}{\end{itemize}}
\newtheorem{theorem}{Theorem}[section]
\newtheorem{lemma}[theorem]{Lemma}
\newtheorem{corollary}[theorem]{Corollary}
\newtheorem{proposition}[theorem]{Proposition}
\theoremstyle{definition}
\newtheorem{definition}[theorem]{Definition}
\newtheorem{example}[theorem]{Example}
\theoremstyle{remark}
\newtheorem{remark}[theorem]{Remark}
\begin{document}

\title[]{Simplicity of the automorphism groups of order and tournament expansions of homogeneous structures}

\date{}
\author[F.~Calderoni]{Filippo Calderoni}
\address{Department of Mathematics, Statistics, and Computer Science,
University of Illinois at Chicago, Chicago (IL) 60613, USA}
\email{fcaldero@uic.edu}

\author[A.~Kwiatkowska]{Aleksandra Kwiatkowska}
\address{Institut f\"ur Mathematische Logik und Grundlagenforschung, Westfalische Wilhelms-Universit\"at M\"unster, Einsteinstr. 62, 48149 M\"unster,
Germany \textbf{and} Instytut Matematyczny, Uniwersytet
Wroc\l{}awski, pl. Grunwaldzki 2/4, 50-384 Wroc\l{}aw, Poland}
\email{kwiatkoa@uni-muenster.de}

\author[K.~Tent]{Katrin Tent}
\address{Institut f\"ur Mathematische Logik und Grundlagenforschung, Westfalische Wilhelms-Universit\"at M\"unster, Einsteinstr. 62, 48149 M\"unster,
Germany}
\email{tent@wwu.de}

 \subjclass[2010]{Primary: 03C98, 03E15, 20B27}
 \keywords{simple groups, automorphism groups, Fra\"iss\'e limits, order expansions, Polish groups}
\thanks{The authors were funded by the DFG under the Germany's Excellence Strategy ---EXC 2044--- Mathematics M\"unster: Dynamics--Geometry--Structure. The second author was also supported by Narodowe Centrum Nauki grant 2016/23/D/ST1/01097. }

\begin{abstract} 
We define the notions of  a free fusion of structures and a weakly stationary independence relation and use them to prove simplicity for the automorphism groups of order and tournament expansions of homogeneous structures like the {rational} bounded Urysohn space, the random graph, and the random poset.% The proof also shows that the automorphism group of the random tournament is simple.
\end{abstract}

\maketitle

%\tableofcontents
\section{Introduction}

This article contributes to the study of the automorphism groups of countable structures.
Such groups are natural examples of separable and completely metrisable topological groups. The richness of their topological properties have recently brought to light a crucial interplay between Fra\"iss\'e amalgamation theory and other areas of mathematics like topological dynamics, Ramsey theory, and ergodic theory. (See \cite{AngKecLyo} and \cite{KecPesTod}.)

The program of understanding the normal subgroup structure of these groups dates back at least to the '50s, when
Higman~\cite{Hig54} proved that \(\Aut(\QQ,<)\), the group of order-preserving permutations of the rational numbers, has very few normal subgroups.\footnote{In fact, the only nontrivial normal subgroups of \(\Aut(\QQ,<)\) are the obvious ones: the one consisting of those automorphisms that fix point-wise some interval \((a, \infty)\), the one consisting of those automorphisms that fix point-wise some interval \((- \infty, b)\), and their intersection.}

In recent years, Macpherson and Tent~\cite{MacTen} proved simplicity for a large collection of groups that arise in a similar fashion as automorphism groups of homogeneous structures.
Their methods encompass a number of examples that had been considered before by various authors: the random graph~\cite{Tru85}, the random  \(K_{n}\)-free graphs and the random tournament~\cite{Rub}, 
and many others.
However, as the authors of~\cite{MacTen} pointed out, their framework does not apply to ordered or even partially ordered structures, in particular it does not apply to the random poset whose automorphism group was proved to be simple in  \cite{GMR}.

A few years later, Tent and Ziegler~\cite{TenZie} introduced the notion of  a \emph{stationary independence relation} and investigated automorphism groups of structures allowing for such a relation.
Their approach is very general: apart from recovering the cases from \cite{MacTen} it
 applies to the random poset and 
 many homogeneous metric structures like the Urysohn space and its variations.
However,  ordered homogeneous structures like the ordered random graph and the random tournament do not carry such a stationary independence relation.

In this article we weaken the notion of a stationary independence relation from~\cite{TenZie} to study the automorphism groups of many order and tournament expansions of structures arising naturally in Fra\"iss\'e amalgamation theory. We believe that such \emph{weakly stationary independence relations} will also be useful in other expansions of homogeneous structures.

Before stating our main theorem, we introduce some terminology. 

\begin{definition}\label{def:fusion}
Let $L_i, i=1,2,$ be  disjoint relational languages and let ${\bf M}_i, i=1,2,$ 
countable homogeneous $L_i$ structures on the same universe $M$. We call an $\mathcal{L}^{*}= L_1\cup L_2$ structure ${\bf M}^*$ on $M$ the free fusion of ${\bf M}_1$ and ${\bf M}_2$ if ${\bf M}^*\restriction L_i =M_i$, $i=1,2,$ and
\begin{itemizenew}
\item [\((\ast)\)] for every non-algebraic $L_i$-type $p_i$ over a finite set $A_i\subset M$ for  $i=1, 2,$ their union $p_1\cup p_2$ is realized in ${\bf M}^*$.
\end{itemizenew} 
For any $\mathcal{L}^*$-type $p$ and $L\subset\mathcal{L}^*$, we write $p_L$ for its restriction to $L$.
\end{definition}

All types considered in this article are types over finite sets.

Similar constructions were considered by {Bodirsky \cite{Bod}} and Soki\'c in \cite{Sok}.

We are particularly interested in the following special cases:

\noindent
{\bf I. Order expansion}\quad
Let \(L_1\) be a relational language and \(\mathbf{M} =\mathbf{M}_1\) 
be a countable homogeneous \(L_1\)-structure on a set $M$.
Let $L_2=\{<\}$ and $\mathbf{M}_2\cong \QQ$ be a dense linear ordering on $M$.
In this case we denote the free fusion of $\mathbf{M}_1$ and $\mathbf{M}_2$  by  \(\mathbf{M}_<\) and call it an \emph{order expansion} of \( \mathbf{M}\).
Thus, an \(\mathcal{L}^{*}\)-structure \(\mathbf{M}_<\) is
an order expansion of \( \mathbf{M}\)
if \(<\) is a total order on \(\mathbf{M}\) {without endpoints}, \(\mathbf{M}_< \restriction L_1 = \mathbf{M}\), and \(\mathbf{M}_{<}\) satisfies the following property:

\begin{itemizenew}
\item [\((\ast)\)] For every non-algebraic $1$-type \(p_{L_1}\) over a finite set \(A\), and every interval \((a,b)\subseteq M\), there is a realization of \(p\) in \((a,b)\).
\end{itemizenew}

 \noindent
{\bf II. Tournament expansion}\quad
Let \(L_1\) be a relational language and \(\mathbf{M}=\mathbf{M}_1\) be a homogeneous \(L_1\)-structure on a set $M$.
Let $L_2=\{\rightarrow\}$ and $\mathbf{M}_2$ be a random tournament on $M$.
In this case we denote the free fusion of $\mathbf{M}_1$ and $\mathbf{M}_2$  by  \(\mathbf{M}_\rightarrow\) and call it a \emph{tournament expansion} of \( \mathbf{M}\).
Thus, an \(\mathcal{L}^{*}\)-structure \(\mathbf{M}_\rightarrow\) is
a tournament  expansion of \( \mathbf{M}\)
if \(\rightarrow\) is a tournament on \(M\), \(\mathbf{M}_\rightarrow \restriction L_1 = \mathbf{M}\), and \(\mathbf{M}_\rightarrow\) satisfies the following property:

\begin{itemizenew}
\item [\((\ast)\)] For every non-algebraic $1$-type \(p_{L_{1}}\) over a finite set \(X\), and two disjoint finite subsets $A, B \subseteq M$, there is a realization $x$ of \(p\) such that $x\rightarrow a$ for all $a\in A$ and $b\rightarrow x$ for all $b\in B$.
\end{itemizenew}

\begin{remark}\label{rem:ordered Fraisse}

Note that if ${\bf M}_i, i=1,2,$ is the Fra\"iss\'e limit of some $L_i$-class $\mathcal{C}_i, i=1,2$ having disjoint amalgamation, then a structure ${\bf M}^*$ 
is the free fusion of ${\bf M}_1$ and ${\bf M}_2$  if and only if  ${\bf M}^*$ 
is the Fra\"iss\'e limit of the $\mathcal{L}^*$-class $\mathcal{C}^*$ where an $\mathcal{L}^*$-structure $A^*$ is in $\mathcal{C}^*$ if and only if $A^*\restriction L_i \in\mathcal{C}_i, i=1, 2$.

Thus, a structure ${\bf M}^*$ 
is an order expansion of a Fra\"iss\'e limit ${\bf M}$ with disjoint amalgamation if and only if  ${\bf M}^*$ 
is the Fra\"iss\'e limit of the class $\mathcal{C}_<$ where an $\mathcal{L}^*$-structure $A_<$ is in $\mathcal{C}_<$ if and only if $A_<\restriction L_1 \in\mathcal{C}$. Equivalently,  $\mathcal{C}_<$ consists of all $A\in\mathcal{C}$ expanded by all possible orderings.
Similarly for the tournament expansion of a Fra\"iss\'e limit.
\end{remark}
Our main theorem can now be stated as follows:

\begin{theorem}
\label{thm:main}
Assume that  \(\mathbf{M}\) is one of the following:
\begin{enumerate}
\item the Fra\"iss\'e limit of a free, transitive and nontrivial amalgamation class\footnote{We call an amalgamation class nontrivial, if its limit is not an indiscernible set and transitive if the automorphism group of its Fra\"iss\'e limit is transitive.}; 
%\item the  Urysohn space; 
\item the bounded rational Urysohn space; or
\item the random poset.
\end{enumerate}

If \(\mathbf{M}^*\) is an order expansion  of \(\mathbf{M}\),
then \(G:=\Aut(\mathbf{M}^*)\) is simple.
The same holds if\/ $\mathbf{M}^*$ is  a tournament expansion of $(1)$ or $(2)$.
\end{theorem}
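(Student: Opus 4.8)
The plan is to follow the blueprint of Tent and Ziegler~\cite{TenZie}: equip \(\mathbf{M}^*\) with a weakly stationary independence relation and then show that the normal closure of any nontrivial element of \(G=\Aut(\mathbf{M}^*)\) is all of \(G\). The first step is to record a stationary independence relation on the reduct \(\mathbf{M}\). In case (1) this is the free-amalgamation independence, where \(A\indep[C]B\) means that \(AC\) and \(BC\) are freely amalgamated over \(C\) (no \(L_1\)-relations between \(A\) and \(B\) beyond those forced over \(C\)); in case (2) it is the ``maximal distance'' independence on the bounded rational Urysohn space; and in case (3) it is the independence on the random poset already isolated in~\cite{TenZie}. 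In each case the defining amalgamation directly yields the existence and stationarity axioms, while nontriviality ensures that the structure is not rigid and transitivity provides the point-homogeneity used throughout the argument.

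Next I would transport this relation to the expansion. I define independence on \(\mathbf{M}^*\) by declaring \(A\indep[C]B\) to hold exactly when it holds in the reduct \(\mathbf{M}\) and the \(L_2\)-relations (the order, respectively the tournament) between \(A\) and \(B\) over \(C\) are as generic as possible. The role of the free fusion is that the \emph{existence} axiom now comes essentially for free from property \((\ast)\): given a non-algebraic \(L_1\)-type over \(C\) and any admissible order- or tournament-configuration of a realization relative to \(B\), property \((\ast)\) produces a realization in \(\mathbf{M}^*\) achieving it. Full stationarity must fail --- two points with the same type over \(C\) can lie on opposite sides of some \(b\) --- so here one verifies only the weaker form: the relation is stationary on the \(L_1\)-reduct, and the residual freedom in the \(L_2\)-relations is controlled separately using the homogeneity of \((\QQ,<)\), respectively of the random tournament.

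With the weakly stationary independence relation in hand, the simplicity argument proceeds along the usual lines. Say that \(g\in G\) \emph{moves \(A\) maximally} if \(A\indep g(A)\). I would then establish, in order: (i) every element of \(G\) is a product of boundedly many maximally-moving elements, using existence to push supports apart; (ii) any two maximally-moving elements realizing matching types are conjugate in \(G\), which is where weak stationarity is used --- one builds the conjugating automorphism by a back-and-forth in which stationarity of the reduct matches the \(L_1\)-data at each stage while the richness of \(\Aut(\QQ,<)\) (respectively of the tournament automorphism group) absorbs the order (respectively tournament) discrepancies; and (iii) the normal closure of any nontrivial \(g\) contains a maximally-moving element. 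Chaining (i)--(iii) shows that this normal closure is all of \(G\), which is the desired simplicity.

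The main obstacle is step (ii), together with the underlying reconciliation of the base independence with the linear order. It is instructive that \(\Aut(\QQ,<)\) is itself \emph{not} simple, by Higman's theorem, so the ordered component alone already obstructs simplicity; the proof must genuinely exploit the base structure \(\mathbf{M}\) to dissolve the normal subgroups that the order would otherwise create. Concretely, the delicate point is to produce enough mutually conjugate maximally-moving automorphisms despite the rigidity imposed by \(<\): one must interleave the conjugacy construction for the reduct with an order- (respectively tournament-) theoretic back-and-forth and check that the two can be run simultaneously. The restriction to cases (1) and (2) in the tournament statement reflects exactly where this simultaneous construction can be made to work.
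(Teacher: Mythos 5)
Your overall blueprint is the right one---this is indeed how the paper proceeds (stationary independence on the reduct, a weakened stationarity on the expansion, and a Tent--Ziegler-style bounded-generation argument)---but two of your key steps have genuine gaps. First, your definition of independence on \(\mathbf{M}^*\) (reduct-independence \emph{plus} ``\(L_2\)-relations as generic as possible'') does not work for a linear order: any two points are comparable, so there is no generic order-configuration, and building an \(L_2\)-condition into \(\indep\) would break Existence and Invariance. The paper instead uses the \(L_1\)-independence relation on \(\mathbf{M}^*\) unchanged and weakens only the Stationarity axiom (two independent realizations of the same type over \(B\) agree on their \(L_1\)-type over \(BC\), and agree fully once their \(L_2\)-types are matched by hand). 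Relatedly, the notion of moving maximally you use, \(A\indep g(A)\), is the global one; what the argument needs is the local version (every non-algebraic type over a finite \(X\) has a realization \(x\) with \(x\indep[X;g(X)]g(x)\)) \emph{together with} \(L_2\)-homogeneity of \(g\), which your definition omits.

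Second, and more seriously, your step (ii) is where the real work lies and ``the richness of \(\Aut(\QQ,<)\) absorbs the order discrepancies'' is not an argument. In the four-conjugates step one must splice conjugators \(a_1,\dots,a_4\) along a chain \(Y_0,\dots,Y_4\), and the element \(y\) witnessing the middle splice must simultaneously realize prescribed \(L_2\)-types over \(x\), over \(z\), and over a finite set \(Y^+\); for a tournament this union of \(1\)-types is automatically consistent, but for a linear order it generally is not. The paper isolates this as a separate \emph{compatibility} condition, proves it only for automorphisms that are \emph{unboundedly increasing} (via the notion of a ``full'' finite extension \(X\), Lemma 4.7), and must first manufacture such an automorphism in \(\langle h\rangle^G\) through a four-stage commutator construction (fixed-point-free \(\to\) strictly increasing \(\to\) unboundedly increasing \(\to\) moving maximally). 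An arbitrary maximally-moving \(g\) need not be compatible, so your chain (i)--(iii) does not close. Your step (i) as stated (bounded products of maximally-moving elements) is also not the intermediate step used; the paper runs a Baire-category argument showing the map \((h_1,\dots,h_4)\mapsto g_4^{h_4}\cdots g_1^{h_1}\) has somewhere-dense image. Finally, note that the ordered random poset (case (3)) is \emph{not} handled by this machinery in the paper at all: it gets a separate argument following Glass--McCleary--Rubin, adapted so that each step preserves the linear order, so including it in your uniform treatment is another gap.
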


%The proof of Theorem~\ref{thm:main} also yields the following:
%\begin{theorem}\label{thm:tournament}If $M$ is the random tournament, then $G=\Aut({\bf M})$ is a simple group and for any $g\in G$ any $h$ can be written as a product of at most $32$ conjugates of $g$ and $g^{-1}$.\end{theorem}

Apart from the {ordered bounded rational Urysohn space} see (e.g. \cite{TenZiebounded}), Theorem~\ref{thm:main} implies simplicity of the automorphism groups of various countable structures including 
the ordered random poset, 
the ordered random graph, the ordered random {$n$-hypergraphs}, the ordered  {random} \(K_{n}\)-free graphs and their hypergraph analogues. 

\begin{remark}
%\begin{enumerate}
%\item To transfer the result from the countable bounded Urysohn space to the complete (and hence uncountable) one, one uses the exact same arguments as in \cite{TenZie} and \cite{TenZiebounded}.
%\item 
In the same way as in \cite{TenZie} we can also conclude that for the ordered rational Urysohn space $\mathbb{U_<}$, the quotient of $\Aut(\mathbb{U_<})$ modulo the normal subgroup of automorphisms of bounded displacement is a simple group.

Note also that with minor modifications the same proof applies to expansions by several independent tournaments or indeed for any binary homogeneous structure with the property that the union of $1$-types over disjoint finite sets is always consistent.
Furthermore, it is worth pointing out that the proof also adapts to several independent orderings. These generalisations were shown by Silke Mei\ss ner in her master's thesis \cite{Mei}.

%\end{enumerate}
\end{remark}

Theorem~\ref{thm:main} is proved in two main steps:  we first define a notion of moving maximally adapted to free fusion structures and a notion of compatibility  and prove:

\begin{theorem}
\label{thm:4conjugates}
Let ${\bf M}^*$ be the free fusion of a homogeneous $L_1$-structure ${\bf M}_1$ carrying a stationary independence relation with an $L_2$-structure ${\bf M}_2$. If $g\in G$ moves maximally and is compatible, then any element of $G$ is the product of at most eight conjugates of $g$ and  $g^{-1}$.
\end{theorem}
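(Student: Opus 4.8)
The plan is to adapt the Tent--Ziegler argument for stationary independence relations \cite{TenZie} to the two-sorted setting of a free fusion, using the compatibility hypothesis to control the $L_2$-reduct, for which no independence relation is available. Throughout, the stationary independence relation on ${\bf M}_1$ supplies the existence, stationarity and symmetry properties that drive a back-and-forth construction, while the free fusion property $(\ast)$ lets us realize the $L_2$-relations between a new point and previously chosen points independently of the $L_1$-type we are trying to match.

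First I would reduce to the case of a single maximally moving element. Given an arbitrary $f \in G$, I would build, by a back-and-forth using the independence relation, an automorphism $s$ that moves maximally and for which $f s^{-1}$ also moves maximally; concretely, one arranges along finite tuples exhausting $M$ that the images under $s$ and under $f s^{-1}$ are independent from their domains over the empty set. Writing $f = (f s^{-1})\cdot s$ then expresses $f$ as a product of two maximally moving elements, so it suffices to bound the number of conjugates needed for a single maximally moving and compatible element.

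The core step is to show that any maximally moving compatible $h \in G$ is a product of at most four conjugates of $g$ and $g^{-1}$. Here I would construct automorphisms $c_i$ so that $\prod_i c_i\, g^{\varepsilon_i}\, c_i^{-1}$ agrees with $h$ on an increasing union of finite substructures covering $M$. At each stage of the back-and-forth one has a finite partial agreement and wants to extend it by one new point: the maximal movement of both $g$ and $h$ guarantees that the required $L_1$-type of the image is realized independently from the current domain, while $(\ast)$ together with compatibility of $g$ guarantees that the $L_2$-relations (the order, resp.\ the tournament edges) between the new point and the finitely many earlier points can be matched to those prescribed by $h$. A single conjugate does not suffice, because matching $h$ on a tuple $\bar a$ constrains both $\bar a$ and $h\bar a$ simultaneously; splitting the work across several conjugates, alternately correcting the ``forward'' and ``backward'' halves of the back-and-forth, yields the bound of four. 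Combining with the reduction above gives a product of at most eight conjugates.

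I expect the main obstacle to be exactly the simultaneous coordination of the two reducts in the back-and-forth of the core step. The independence relation governs only $L_1$, so each extension must separately certify that the chosen $L_2$-pattern is consistent and realizable; the delicate point is that the point realizing the correct $L_1$-type over the current domain must also be choosable with the correct order, resp.\ tournament, relations to all earlier points, and this is precisely what compatibility of $g$ (ensuring $g$ already exhibits every relevant $L_2$-configuration) and property $(\ast)$ (ensuring such realizations exist) are engineered to provide. Verifying that the resulting union of finite partial maps is a total automorphism of ${\bf M}^*$---rather than merely of one reduct---is where the two hypotheses must be combined most carefully.
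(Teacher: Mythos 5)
Your high-level reading of where each hypothesis does its work (the stationary independence relation controls the $L_1$-reduct, property $(\ast)$ and compatibility control the $L_2$-reduct) matches the paper, but your global architecture is different from the paper's and, as written, the core step is not actually proved. The paper does not factor $f$ as a product of two maximally moving elements and does not write any single automorphism as an \emph{exact} product of four conjugates by a back-and-forth. Instead it proves a one-step statement (Proposition~\ref{P:vier}): after arranging, via Proposition~\ref{P:zurichtung}, anchor sets $Y_0,\dots,Y_4$ with $g_i(Y_{i-1})=Y_i$, $Y_0\indep[Y_1]Y_2Y_3$, $Y_1Y_2\indep[Y_3]Y_4$ and $Y_1$ full for $g_2$, any single type-correspondence $\tp(x_0/Y_0)\mapsto\tp(x_4/Y_4)$ induced by $g_4g_3g_2g_1$ is realized by $g_4^{a_4}\cdots g_1^{a_1}$ for suitable $a_i\in\Fix(Y_{i-1}Y_i)$. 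This yields that the image of $(h_1,\dots,h_4)\mapsto g_4^{h_4}g_3^{h_3}g_2^{h_2}g_1^{h_1}$ restricted to any open set is dense in some open set (Proposition~\ref{prop:nwd}), and the count of eight then comes from the topological argument of \cite[2.7]{TenZie}, which produces $f=pq^{-1}$ with $p,q$ each a product of four conjugates. The doubling from four to eight is a Baire-category phenomenon, not a factorization of $f$ into two maximally moving pieces.

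The genuine gap in your proposal is the core step: you assert that a back-and-forth will make $\prod_i c_ig^{\varepsilon_i}c_i^{-1}$ agree with $h$ on an increasing chain of finite sets, but you give no mechanism for extending the four partial conjugators coherently. At each extension you must track the intermediate images $x_1=g_1^{a_1}(x_0)$, $x_3=(g_4^{-1})^{a_4}(x_4)$, keep them independent from the middle anchors in the pattern $x_1\indep[Y_1]Y_2Y_3$ and $Y_1Y_2\indep[Y_3]x_3$, and then bridge $x_1$ to $x_3$ across $g_3^{a_3}g_2^{a_2}$; this bridging is exactly what fullness of $Y_1$, Proposition~\ref{prop:compatible}, and clause (2) of Definition~\ref{def:compatible} (the existence of $b'$, the enlarged full set $Z^+$, and the consistency of the displayed $L_2$-type) are for, and none of it appears in your sketch. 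Worse, the conjugators produced by Proposition~\ref{P:vier} for one tuple need not extend those produced for a longer tuple, since the anchor sets must grow and the independence pattern must be re-established from scratch; the paper sidesteps this precisely by settling for density and invoking Baire category. Your reduction $f=(fs^{-1})s$ also needs both factors to be $L_2$-homogeneous (in the order case, say $s$ strictly increasing with $s(y)>\max\{y,f(y)\}$ so that $fs^{-1}$ is strictly decreasing), which is achievable but unargued, and your core step is stated for \emph{compatible} $h$ while the reduction only delivers maximally moving factors. As it stands, the proposal restates the theorem's difficulty rather than resolving it.
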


In Section~\ref{Sec:MoveMaximally} we  prove simplicity of $G$ for the ordered random posets and 
complete the proof of Theorem~\ref{thm:main} by proving:

\begin{proposition}\label{prop:ncl}
If\/ ${\bf M}^*$ is an order or tournament expansion of a structure ${\bf M}$  as in Theorem~\ref{thm:main} (1) or (2)  and ${\id\neq h}\in G=\Aut({\bf M}^*)$, then there is some $g\in \langle h\rangle^G$ that moves maximally and is compatible.
\end{proposition}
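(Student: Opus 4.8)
The plan is to treat Proposition~\ref{prop:ncl} as the step that feeds Theorem~\ref{thm:4conjugates}. In cases~(1) and~(2) the $L_1$-reduct $\mathbf{M}_1$ carries a stationary independence relation (free-amalgamation independence for a free amalgamation class, maximal-distance independence for the bounded rational Urysohn space), so the hypothesis of Theorem~\ref{thm:4conjugates} is met; note that case~(3), the random poset, is deliberately excluded here and handled separately. Once we exhibit $g\in\langle h\rangle^G$ that moves maximally and is compatible, Theorem~\ref{thm:4conjugates} writes every element of $G$ as a product of at most eight conjugates of $g^{\pm1}$, all of which lie in $\langle h\rangle^G$; hence $\langle h\rangle^G=G$ and simplicity follows. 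So the whole content is the construction of one such $g$ from an arbitrary $\id\neq h$.

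First I would locate a nontrivial displacement to amplify. Since $h\neq\id$, fix $c\in M$ with $h(c)\neq c$. In case~(1) transitivity of $\Aut(\mathbf{M})$ together with nontriviality (the limit is not an indiscernible set) guarantees that $\tp_{L_1}(h(c)/c)$ is non-algebraic and genuinely nontrivial, and in case~(2) the positive distance between the distinct points $c$ and $h(c)$ supplies the same. The objective is then to convert this single nontrivial move into an automorphism that displaces an entire generating chain which is $L_1$-independent and correctly positioned in the $L_2$-reduct.

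Next I would build $g$ from conjugates $h^{f_i}$ and $(h^{f_i})^{-1}$, so that membership in $\langle h\rangle^G$ is automatic, choosing the $f_i$ so that a back-and-forth verification along an increasing exhaustion $A_0\subseteq A_1\subseteq\cdots$ of $M$ by finite sets certifies the two required properties. At each stage the images of the newly enumerated points must be placed so that: (i) in the $L_1$-reduct they are independent, in the sense of the stationary independence relation on $\mathbf{M}_1$, from the data already committed, which is exactly what moving maximally demands; and (ii) in the $L_2$-reduct they occupy the prescribed positions (inside a given interval for the order expansion, and with prescribed incoming and outgoing edges for the tournament expansion), which is what compatibility demands. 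The defining property $(\ast)$ of the free fusion is precisely what reconciles (i) and (ii): a single non-algebraic $L_1$-type is realized while simultaneously meeting the $L_2$-constraint, so both demands are discharged by one application of $(\ast)$ per step, and homogeneity of $\mathbf{M}^*$ upgrades each realized configuration to an honest conjugate of $h^{\pm1}$ whose restriction supplies the required extension.

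The hard part is the simultaneous bookkeeping across the two reducts: one must ensure that fixing the $L_2$-positions never forces an unwanted $L_1$-dependence, and conversely that imposing $L_1$-independence never clashes with the order or tournament constraints. This is where stationarity (to glue the local independent extensions into a single coherent type) and property~$(\ast)$ (to make the glued type realizable inside the prescribed interval or with the prescribed edges) do the essential work, and where cases~(1) and~(2) must be checked to provide stationary independence relations of the form the argument needs. A further point requiring care is that $\langle h\rangle^G$ is the subgroup generated by the conjugates of $h$ and is not topologically closed, so the construction must be arranged to yield a genuine \emph{finite} product of conjugates of $h^{\pm1}$ realizing the maximal, compatible movement, rather than merely a limit of such products; verifying that this element indeed moves maximally and is compatible, and not only at every finite stage, is the crux of the proof.
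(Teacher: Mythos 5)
Your plan captures the right outer shell --- produce $g\in\langle h\rangle^G$ as (commutators of) conjugates of $h$ via back-and-forth, using property $(\ast)$ to realize an $L_1$-type inside a prescribed $L_2$-configuration --- but it misses the two components that carry the actual content of the paper's argument. First, you treat compatibility as if it were the requirement that the back-and-forth place images in prescribed $L_2$-positions; that is $L_2$-homogeneity, not compatibility. Compatibility (Definition~\ref{def:compatible}) is a separate assertion that \emph{every} finite set $X_0$ admits a ``full'' extension $X$ satisfying clauses (1) and (2), and the paper does not build it into the construction of $g$ at all: it proves, as standalone statements (Propositions~\ref{prop:tournament and order compatible} and~\ref{prop:order compatible}), that any maximally moving automorphism which is $\to$-homogeneous, respectively unboundedly increasing, is automatically compatible. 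That verification requires exhibiting explicit full sets such as $X=\{g^m(x)\colon x\in X_0,\ m\in\ZZ,\ x_{\min}\leq g^m(x)\leq x_{\max}\}$, an induction on the length of the tuples $x,y$, and (for orders) the construction of an order-automorphism in $\Fix(Xg(X))$ conjugating $g$ appropriately (Lemma~\ref{lem:order expansions}), plus the consistency check for the $L_2$-type $p$ in clause (2). None of this appears in your proposal, and it cannot be absorbed into the back-and-forth construction of $g$, since it quantifies over all finite $X_0$ and all admissible tuples after $g$ is already fixed.

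Second, for order expansions an arbitrary $h\neq\id$ need not be $<$-homogeneous, so no single back-and-forth starting from $h$ produces the desired $g$. The paper needs the chain: no nontrivial element fixes the realization set of a non-algebraic type (Lemma~\ref{lem:orbit}, Corollary~\ref{cor:fixpointisolate}), hence a fixed-point-free commutator $[h,f_1]$ exists (Lemma~\ref{l:no fixed point}); then a strictly increasing commutator (Lemma~\ref{l:strictly increasing}); then an unboundedly increasing element (Lemma~\ref{l:unbounded}); and only then a maximally moving unboundedly increasing $g$ (Lemma~\ref{lem:mov max}, which in the Urysohn case also routes through ``moving almost maximally'' as in the bounded Urysohn paper). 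Your closing sentence concedes that verifying maximal movement and compatibility ``is the crux''; since that crux is exactly what the proposition asserts and your sketch supplies no mechanism for it, the proposal has a genuine gap rather than an alternative route.
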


Clearly, Proposition~\ref{prop:ncl} and Theorem~\ref{thm:4conjugates} imply Theorem~\ref{thm:main} for the cases (1) and (2).

%%%%%%%%%%%-----------------------Section 1---------------------%%%%%%%%%%%
 
\section{Background and definitions}

%Let \(\boldsymbol M\) be the Fra\"iss\'e limit of a free amalgamation class with universe \(M\), and l

First we recall the definition of a stationary independence relation due to Tent and Ziegler.
We also follow their convention by saying that a tuple $a'$ from a countable structure $M$ realizes a type $\tp(a/A)$ over a finite set $A\subset M$ if there is an automorphism of  \(\mathbf{M}\) that maps $a$ to $a'$
and fixes $A$ pointwise.

\begin{definition}{\cite[Definition~2.1]{TenZie}}
Let \(\mathbf{M}\) be a countable structure with universe \(M\) and 
let \(\indep[]\) be a ternary relation between finite subset of \(M\). 
We say that \(\indep[]\) is a \emph{stationary independence relation} on ${\bf M}$ if for all finite sets \(A,B,C,D \subseteq M\) the following hold:
\begin{enumerate-(i)}
\item
(Invariance)
The independence of \(A\) and \(B\)  over \(C\) depends only on the type of \(ABC\). In other words, if $\tp(ABC)=\tp(A'B'C')$, then \[{A}\indep[C]{B} \text{\quad  if and only if  \quad } {A'}\indep[C']{B'}.\]
\item
(Monotonicity)
\(A\indep[B] CD\) implies that \(A\indep[B]C\) and \(A\indep[BC]D\). 
\item
{\rm(Transitivity)
\footnote{As noted by several people  Transitivity follows from the other axioms. We include it here for convenience.}}
    \[{A}\indep[B]{C}\text{ and }{A}\indep[BC]{D}\;
    \text{ implies }\;{A}\indep[B]{{C}D}.\]
\item
(Symmetry) \(A\indep[B]C\) if and only if \(C\indep[B] A\).
\item
(Existence)  If \(p\) is a non-algebraic type over \(B\) and \(C\) is a finite set, there is some \(a\) realizing \(p\) such that \(a\indep[B]C\).
\item
(Stationarity) If the tuples \(x\) and \(y\) have the same type over \(B\) and are both independent from \(C\) over \(B\), then \(x\) and \(y\) have the same type over \(BC\).
\end{enumerate-(i)}
\end{definition}

{Moreover, we will write $A\indep[]C$ for $A\indep[\emptyset]C$.}
\begin{remark}\label{rem:1-type induction} As in \cite[2.4]{TenZie}
we let  \(A\indep[C;D]B\) denote the conjunction of \(AC\indep[D]B\) and \(A\indep[C]BD\).
Then  Transitivity and Monotonicity imply
\[x_1x_2\indep[A;B]y_1y_2\mbox{ \ if and only if \ }\left[x_1\indep[A;B]y_1 \mbox{ and } x_2\indep[Ax_1;By_1]y_2 \right].\]
\end{remark}

\begin{remark}
Recall from \cite{TenZie} that if \(\mathbf{M}\) is the limit of a Fra\"iss\'e class of structures with free amalgamation, then \(\mathbf{M}\) admits a stationary independence relation:
define \(A\indep[B]C\) if and only if $ABC$ is isomorphic to the free amalgam of $A$ and $C$ over $B$, i.e. if and only if $A\cap C=B$ and for every \(n\)-ary relation \(R\) in \(L\), if \(d_{1},\dotsc, d_{n}\) is an \(n\)-tuple in \(A\cup B\cup C\) with some \(d_{i}\in A\setminus B\) and \(d_{j}\in C\setminus B\), then \(R(d_{1}, \dotsc, d_{n})\) does not hold.
 (See~\cite[Example~2.2]{TenZie}.)

For the stationary independence relation on the (bounded) Urysohn space and other metric spaces, we put $A\indep[C]B$ if and only  if for all $a\in A, b\in B$  there is some $c\in C$ such that $d(a, b)=d(a,c)+d(c,b)$, and $A\indep[]B$ if and only  if for all $a\in A, b\in B$  the distance $d(a,b)$ is maximal, see \cite{TenZie}.

In the same vein, the random poset carries a natural stationary independence relation, namely $A\indep[C]B$ if and only if $A\cap B\subset C$ and for all $a\in A\setminus C, b\in B\setminus C$ such that $a\po b$ or $b\po a$ there is some $c\in C$ such that $a\po c \po b$ or $b\po c\po a $, respectively, where we write $\po$ for the partial order of the random poset (see \cite[4.2.1]{KapSim}). Note that we have  $A\indep[]B$ if and only  if no element of $a$ is comparable in the partial order to any element of $B$, i.e. if for all $a\in A, b\in B$ we have $a\npo b$ and $b\npo a$.
\end{remark}

\begin{definition}\label{def:weak}
Let ${\bf M}^*$  be the free fusion of countable $L_i$-structures  ${\bf M}_i, i=1,2,$ with universe \(M\) and 
let \(\indep[]\) be a ternary relation between finite subset of \(M\). 
We say that $\indep$ is a \emph{weakly stationary independence relation} on  ${\bf M}^*$
if it satisfies \rm{(Invariance), (Monotonicity), (Symmetry), (Existence)} and 
\begin{itemize}
\item[{(v')}]
{\rm (Weak Stationarity)} If  \(x\) and \(y\) have the same $\mathcal{L}^*$-type over \(B\) and are both independent from \(C\) over \(B\), then \(x\) and \(y\) have the same $L_1$-type over \(BC\). Thus, if furthermore $\tp_{L_2}(x/BC)=\tp_{L_2}(y/BC)$, then $x$ and $y$ have the same $\mathcal{L}^*$-type over $BC$.
\end{itemize}
\end{definition}

We first note the following:

\begin{proposition}\label{prop:weak stationary independence}
Let ${\bf M}^*$ be the free fusion of a homogeneous $L_1$-structure ${\bf M}_1$ carrying a stationary independence relation $\indep$ with an $L_2$-structure ${\bf M}_2$. Then on ${\bf M}^*$ the relation $\indep$ is a \emph{weakly stationary independence relation}.
\end{proposition}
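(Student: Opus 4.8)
The plan is to verify each clause of Definition~\ref{def:weak} for the relation $\indep$ regarded on ${\bf M}^*$, reducing everything to the corresponding statement for the stationary independence relation $\indep$ on the reduct ${\bf M}_1$. The three ``syntax-free'' axioms come almost for free. Since $\indep$ refers only to the $L_1$-structure, (Monotonicity) and (Symmetry)---which mention no types---hold on ${\bf M}^*$ verbatim because they hold for $\indep$ on ${\bf M}_1$. (Invariance) transfers at once: the $\mathcal{L}^*$-type of a tuple refines its $L_1$-type, so $\tp_{\mathcal{L}^*}(ABC)=\tp_{\mathcal{L}^*}(A'B'C')$ forces $\tp_{L_1}(ABC)=\tp_{L_1}(A'B'C')$, and (Invariance) for $\indep$ on ${\bf M}_1$ then gives ${A}\indep[C]{B}\iff{A'}\indep[C']{B'}$. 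The recurring tool for the two remaining axioms is the following \emph{type decomposition}: because $L_1\cap L_2=\emptyset$ and both languages are relational, the quantifier-free $\mathcal{L}^*$-type of a tuple over a finite set $A$ is the union of its quantifier-free $L_1$- and $L_2$-types; since ${\bf M}^*$ is homogeneous, the $\mathcal{L}^*$-type over $A$ is therefore determined by the pair $\bigl(\tp_{L_1}(\cdot/A),\tp_{L_2}(\cdot/A)\bigr)$. (Homogeneity of ${\bf M}^*$ is built into the free-fusion setting; if preferred it follows from $(\ast)$ and homogeneity of the reducts by a routine back-and-forth.)

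For (Weak Stationarity), suppose $x$ and $y$ have the same $\mathcal{L}^*$-type over $B$ with $x\indep[B]C$ and $y\indep[B]C$. Restricting to $L_1$ gives $\tp_{L_1}(x/B)=\tp_{L_1}(y/B)$, and both tuples are independent from $C$ over $B$; so (Stationarity) for $\indep$ on ${\bf M}_1$ yields $\tp_{L_1}(x/BC)=\tp_{L_1}(y/BC)$, which is exactly the first assertion of (v'). If moreover $\tp_{L_2}(x/BC)=\tp_{L_2}(y/BC)$, then type decomposition upgrades this to $\tp_{\mathcal{L}^*}(x/BC)=\tp_{\mathcal{L}^*}(y/BC)$, giving the second assertion.

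The main work is (Existence), and this is where the free-fusion property $(\ast)$ of Definition~\ref{def:fusion} enters. Given a non-algebraic $\mathcal{L}^*$-type $p$ over $B$ and a finite set $C$, first note that $p_{L_1}$ and $p_{L_2}$ are both non-algebraic, since an algebraic reduct type would make $p$ itself algebraic. By (Existence) for $\indep$ on ${\bf M}_1$, choose $a_0\models p_{L_1}$ with $a_0\indep[B]C$, and set $q_1:=\tp_{L_1}(a_0/BC)$, a complete $L_1$-type over $BC$ extending $p_{L_1}$. I then apply $(\ast)$ to $q_1$ (over $BC$) and $p_{L_2}$ (over $B$) to obtain $a$ realizing $q_1\cup p_{L_2}$. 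This $a$ satisfies $\tp_{L_1}(a/BC)=q_1$, so $\tp_{L_1}(aBC)=\tp_{L_1}(a_0BC)$ and hence $a\indep[B]C$ by (Invariance); and $\tp_{L_1}(a/B)=p_{L_1}$, $\tp_{L_2}(a/B)=p_{L_2}$, so type decomposition gives $\tp_{\mathcal{L}^*}(a/B)=p$, as required.

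The one point needing an argument before $(\ast)$ applies is that $q_1$ is non-algebraic. This is a standard consequence of the independence axioms: using (Existence) and (Monotonicity) one builds realizations $a_0,a_1,\dots$ of $p_{L_1}$ with $a_{n+1}\indep[B]Ca_0\cdots a_n$; each is independent from $C$ over $B$, hence realizes $q_1$ by (Stationarity), while $a_{n+1}\indep[B]a_0\cdots a_n$ together with non-algebraicity of $p_{L_1}$ keeps them pairwise distinct, so $q_1$ has infinitely many realizations. I expect this non-algebraicity bookkeeping, together with the correct passage from $1$-element to tuple types in $(\ast)$ (handled by the induction of Remark~\ref{rem:1-type induction}), to be the only genuinely delicate point; everything else is a direct transfer from the stationary independence relation on ${\bf M}_1$.
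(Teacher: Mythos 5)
Your proof is correct and follows essentially the same route as the paper: the paper likewise dismisses all axioms except (Existence) as immediate, and proves (Existence) by first taking a realization $a$ of $p_{L_1}$ with $a\indep[B]C$ and then invoking $(\ast)$ to find a realization of $\tp_{L_1}(a/BC)\cup p_{L_2}$. The only addition on your side is the explicit verification that $\tp_{L_1}(a/BC)$ is non-algebraic (so that $(\ast)$ applies), a hypothesis the paper leaves implicit.
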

\begin{proof}
All properties except (Existence) follow immediately.
To see that (Existence) holds for $\mathcal{L}^*$-types, let $p$ be an  $\mathcal{L}^*$-type over a finite set $B$ and let $C$ be a finite set. By (Existence) for $L_1$-types there is a realization $a$ of $p_{L_1}$ with $a\indep[B]C$.
By ($*$), there is a realization $b$ of $\tp_{L_1}(a/BC)$ realizing $p_{L_2}$. Then $b$ realizes $p$ and $b\indep[B]C$.
\end{proof}

\begin{remark}\label{rem:indep} The proof shows in fact that for any finite sets $A, B$ and $C$, any $L_1$-type $p_{L_1}$ over a finite set $A$ and any $L_2$-type $q_{L_2}$ over a finite set $B$ there is a realization of $p_{L_1}\cup q_{L_2}$ which is independent from $C$ over $A$.
\end{remark}

We first note the following adaptation from \cite{TenZie}:
\begin{lemma}\label{L:kechris_rosendal}
If\/ ${\bf M}^*$ is the free fusion of a homogeneous $L_1$-structure ${\bf M}_1$ carrying a stationary independence relation $\indep$ with some $L_2$-structure  ${\bf M}_2$. If $\Aut({\bf M}_2)$ has a dense conjugacy class,
then $G=\Aut({\bf M}^*)$ has a dense conjugacy class as well.
\end{lemma}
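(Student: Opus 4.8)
The plan is to apply the Kechris--Rosendal criterion, which the lemma's name invokes: for a homogeneous structure $\mathbf{N}$, the group $\Aut(\mathbf{N})$ has a dense conjugacy class if and only if the class $\mathbf{N}_p$ of finite substructures of $\mathbf{N}$ equipped with a partial automorphism has the joint embedding property (JEP). By Remark~\ref{rem:ordered Fraisse}, $\mathbf{M}^*$ is the Fra\"iss\'e limit of $\mathcal{C}^*$, so it suffices to verify JEP for the class $\mathcal{C}^*_p$ of pairs $(A,r)$ with $A$ a finite substructure of $\mathbf{M}^*$ and $r$ a partial $\mathcal{L}^*$-automorphism. The density hypothesis on $\Aut(\mathbf{M}_2)$ supplies, via the same criterion, JEP for the analogous class $\mathcal{C}_{2,p}$ over $L_2$, while the stationary independence relation on $\mathbf{M}_1$ supplies JEP for $\mathcal{C}_{1,p}$ over $L_1$, exactly as in Tent--Ziegler. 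The structural observation driving the adaptation is that $(A,r)\in\mathcal{C}^*_p$ if and only if each reduct $(A\restriction L_i,r)$ lies in $\mathcal{C}_{i,p}$ for $i=1,2$; thus $\mathcal{C}^*_p$ is a kind of fiber product of $\mathcal{C}_{1,p}$ and $\mathcal{C}_{2,p}$ over the common underlying (set, partial bijection), and proving its JEP amounts to lifting a single combinatorial amalgam simultaneously to both languages.

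Given $(A,p),(B,q)\in\mathcal{C}^*_p$, I would construct a common extension on the disjoint union $A\sqcup B$ with partial automorphism $r:=p\sqcup q$, so that the two copies are never mixed. For the $L_1$-reduct I take the free amalgam $A\indep B$ provided by the stationary independence relation; monotonicity gives $\dom p\indep\dom q$ and $\ran p\indep\ran q$, and since stationarity makes the independent amalgam unique up to type, the $L_1$-isomorphisms $p$ and $q$ combine to show that $r$ is a partial $L_1$-automorphism---this is precisely the Tent--Ziegler computation. It then remains to place an $L_2$-structure on the \emph{same} set $A\sqcup B$, extending the $L_2$-reducts of $A$ and $B$, for which $r=p\sqcup q$ is again a partial automorphism. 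Once both reducts are in place, $A\sqcup B\in\mathcal{C}^*$ is automatic (each restriction lies in $\mathcal{C}_i$), so $(A\sqcup B,r)\in\mathcal{C}^*_p$ witnesses JEP and the Kechris--Rosendal criterion finishes.

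The hard part will be the $L_2$-reduct. Plain JEP for $\mathcal{C}_{2,p}$ yields \emph{some} joint embedding $(C,s)$ of $(A\restriction L_2,p)$ and $(B\restriction L_2,q)$, but its images may overlap and, more seriously, the amalgamated partial automorphism $s$ may link a point of the $A$-copy to a point of the $B$-copy. One checks that such mixing propagates along $s$-chains and can force $L_1$-relations incompatible with the prescribed $L_1$-structures of $A$ and $B$, so the two reducts then cannot be glued on one underlying set. What I actually need is the stronger \emph{disjoint-image} form of JEP, realizing the $L_2$-amalgam on $A\sqcup B$ with partial automorphism exactly $p\sqcup q$. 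I expect to extract this from the density hypothesis as follows: realize the JEP-amalgam $(C,s)$ inside $\mathbf{M}_2$ and extend $s$ to an \emph{aperiodic} automorphism $\hat s\in\Aut(\mathbf{M}_2)$ by a back-and-forth that never closes a finite orbit (possible since $\mathbf{M}_2$ is infinite and homogeneous); then, writing $A_0,B_0\subseteq\mathbf{M}_2$ for the copies of $A,B$, the translate $\hat s^{N}(B_0)$ is again a copy of $(B,q)$ and, by aperiodicity, meets the finite set $A_0$ for only finitely many $N$, so some translate is disjoint from $A_0$.

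This furnishes the disjoint-image $L_2$-amalgam, which I pull back along $A\sqcup B\to A_0\cup\hat s^{N}(B_0)$. Combined with the $L_1$-free amalgam, the free fusion guarantees that these two reducts jointly determine a member of $\mathcal{C}^*$ on $A\sqcup B$ whose partial map $r=p\sqcup q$ is an $\mathcal{L}^*$-isomorphism on its domain, completing the verification of JEP for $\mathcal{C}^*_p$ and hence the lemma. The genuine obstacle is thus the passage from JEP to disjoint-image JEP for $\mathbf{M}_2$, handled by the aperiodic extension and translation; the remainder is the routine stationary-independence bookkeeping of Tent--Ziegler together with the fiber-product description of $\mathcal{C}^*_p$.
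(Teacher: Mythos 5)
Your route is genuinely different from the paper's, and it has a gap at exactly the step you flag as the crux. The passage from plain JEP for $\mathcal{C}_{2,p}$ to its disjoint-image form via an aperiodic extension $\hat s\supseteq s$ and a translate $\hat s^{N}(B_0)$ does not work in general: an aperiodic extension of $s$ need not exist. If $p$ or $q$ has a periodic point --- already a single fixed point, and $\mathcal{C}_{2,p}$ certainly contains such pairs --- then \emph{every} automorphism extending $s$ has a finite orbit. If that finite orbit meets both copies (for instance because the chosen JEP amalgam identified a fixed point of $p_0$ with one of $q_0$, or because the extra links of $s$ that you yourself point out close a mixed cycle), then $\hat s^{N}(B_0)$ meets $A_0$ for \emph{every} $N$ and no translate is disjoint. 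You cannot escape by choosing a ``better'' amalgam $(C,s)$, since the existence of an amalgam with disjoint images and partial map exactly $p\sqcup q$ is precisely the disjoint-image JEP you are trying to derive; the argument is circular there. (Even in the cycle-free case, ``never closing a finite orbit'' in the back-and-forth needs the relevant $L_2$-types to have realizations outside any prescribed finite set, i.e.\ some non-algebraicity of $\mathbf{M}_2$, which you should at least record.) So as written the proof covers only pairs $(A,p)$, $(B,q)$ whose partial automorphisms, together with the chosen amalgam, are free of cycles.

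For contrast, the paper never leaves $\mathbf{M}^*$ and never needs disjointness: it uses the equivalent type-theoretic form of the criterion (for tuples $\bar x,\bar y,\bar a,\bar b$ with $\tp(\bar x)=\tp(\bar y)$ and $\tp(\bar a)=\tp(\bar b)$, find $\bar x',\bar y'$ with $\tp(\bar x'\bar y')=\tp(\bar x\bar y)$ and $\tp(\bar x'\bar a)=\tp(\bar y'\bar b)$), realizes $\tp(\bar x\bar y)$ by $\bar x'\bar y'$ with $\bar x'\bar y'\indep[]\bar a\bar b$ using Existence, then invokes the dense conjugacy class of $\Aut(\mathbf{M}_2)$ \emph{in this same type form over the parameters} $\bar a\bar b$ together with condition $(\ast)$ to impose $\tp_{L_2}(\bar x'\bar a)=\tp_{L_2}(\bar y'\bar b)$, and finishes with Weak Stationarity. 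Because the $L_2$-data is prescribed as a type over the fixed tuple rather than glued in from an abstract amalgam, the periodic-point obstruction never arises. To repair your argument you would either need to prove disjoint-image JEP for $\mathcal{C}_{2,p}$ directly from the dense conjugacy class (it is not a formal consequence of plain JEP), or reorganize the $L_2$-step along the paper's lines.
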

\begin{proof}
  Clearly, $G$ contains a dense conjugacy class if and
  only if for any finite tuples $\bar x,\bar y,\bar
  a, \bar b$ with $\tp(\bar x)=\tp(\bar y)$ and $\tp(\bar a)=\tp(\bar
  b)$ there are tuples $\bar x', \bar y'$ such that $\tp(\bar x'\bar
  y')=\tp(\bar x\bar y)$ and $\tp(\bar x'\bar a)=\tp(\bar y'\bar
  b)$. Since ${\bf M}^*$ carries a weakly stationary independence relation,  we can
  choose $\bar x'\bar y'$ realising $\tp(\bar x\bar y)$ with ${\bar
    x'\bar y'}\indep[]{\bar a\bar b}$. By condition ($*$) and since $\Aut({\bf M}_2)$ has a dense conjugacy class, we can choose  $\bar x'\bar y'$  so that $\tp_{L_2}(\bar x'\bar a)=\tp_{L_2}(\bar y'\bar
  b)$. By Weak Stationarity we then have $\tp(\bar
  x'\bar a)=\tp(\bar y'\bar b)$.
\end{proof}

\noindent
{\bf Notation}:  For $a,g\in G$, we write $g^a=a^{-1}ga$ and $[a,g]=a^{-1}g^{-1}ag$.
\begin{definition}\label{def:homogeneous}
We say that $g\in G$ is $L_2$-\emph{homogeneous} if 
for any element $x\in M$ and $a\in G$ we have
$\tp_{L_2}(g(x)/x)=\tp_{L_2}(g^a(x)/x)$, or, equivalently, if for all $x, y\in M$ we have $\tp_{L_2}(xg(x))=\tp_{L_2}(yg(y))$.
\end{definition}

\begin{example}
Let $\mathbf{M}^*$ be the free fusion of a homogeneous $L_1$-structure $\mathbf{M}_1$ carrying a stationary independence relation with an $L_2$-structure $\mathbf{M}_2$ and let $G=\Aut(\mathbf{M}^*)$. 

\medskip\noindent
1. If $\mathbf{M}_2$ is the trivial structure, any fixed point free $g\in G$ is $L_2$-homogeneous.

\medskip\noindent
2. If ${\bf M}_<$ is an order expansion of 
${\bf M_{1}}$, then $g\in G$ is $<$-homogeneous if and only if { \(g\) is the identity isomorphism, or} $g$ is strictly increasing or $g$ is strictly decreasing.

\medskip\noindent
3. If ${\bf M}_\rightarrow$ is a tournament expansion of 
{${\bf M_{1}}$}, then $g\in G$ is  $\rightarrow$-homogeneous if and only if \(g\) is the identity isomorphism, or $a\rightarrow g(a)$ for all $a\in M$, or  $g(a)\rightarrow a$ for all $a\in M$.

\medskip\noindent
4. If $\mathbf{M}_2$ is the random graph, then  $g\in G$ is 
$E$-homogeneous if and only if  \(g\) is the identity isomorphism, $E(a,g(a))$ for all $a\in M$ or $\neg E(a,g(a))$ for all $a\in M$. 
\end{example}

\bigskip

\begin{remark}\label{rem:L2 homogeneous automorphism}
Using property $(*)$ and back-and-forth it is easy to construct examples of $L_2$-homogeneous automorphisms in each of the cases above.
\end{remark}

The free fusion of two structures both having a stationary independence relation has again a stationary independence relation. Therefore, while our methods will  transfer, we do not consider e.g.  expansions by graphs in this article.

\begin{definition}\label{def:move max}
Let ${\bf M}^*$ be the free fusion of a homogeneous $L_1$-structure ${\bf M}_1$ carrying a stationary independence relation with an $L_2$-structure $\mathbf{M}_2$ and let $G=\Aut({\bf M}^*)$. 
We say that  $g\in G$ {\em moves maximally} if
\begin{enumerate-(i)}
\item \(g\) is $L_2$-homogeneous; and
\item\label{moved max} 
every non-algebraic type over a finite set $X$ has a realization $x$ such that
%\begin{equation}
\label{eq:MovedMaximally}
\[x\indep[X;g(X)]g(x).\]
%\end{equation}
\end{enumerate-(i)}
When $x$ is a realization as in \ref{eq:MovedMaximally}, we say that
\(x\) is {\em moved maximally} by \(g\) over $X$.
\end{definition}

Clearly if $g$ moves maximally, then so  do $g^{-1}$ and all conjugates of $g$.

\medskip\noindent

We will frequently use the following refinement of the maximal moving condition:
\begin{proposition}\label{prop:move max in interval}
If $g\in G$ is moving maximally, then for every non-algebraic $n$-type 
$p$ over a finite set $X$ and non-algebraic $L_2$-type $q_{L_2}$ over $A$ such that $q_{L_2}\cup p_{L_2}$ is consistent there is some $y$ realizing $p\cup q_{L_2}$ which is moved maximally by $g$ over $X$. 
\end{proposition}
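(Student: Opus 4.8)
The plan is to avoid the tempting (and, I expect, doomed) strategy of taking a realization of $p$ that is already moved maximally and simply re-specifying its $L_2$-type over $A$. Instead I would pass to the larger base $X\cup A$, where the hypothesis gives me room to prescribe $q_{L_2}$, apply the moving-maximally hypothesis there, and then \emph{descend} the maximality from base $XA$ back down to base $X$. The mechanism that makes the descent work is to arrange in advance that the new realization is free from $A$ over $X$.

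First I would fix the $L_2$-data. Apply Remark~\ref{rem:indep} with the $L_1$-type $p_{L_1}$ over $X$, the $L_2$-type $p_{L_2}\cup q_{L_2}$ over $X\cup A$ (consistent by hypothesis), and forbidden set $A$: this yields a realization $y_0$ of $p\cup q_{L_2}$ with the extra freeness $y_0\indep[X]A$. Set $p^{+}:=\tp(y_0/XA)$. Then $p^{+}$ restricts to $p$ over $X$, contains $q_{L_2}$, and records $y_0\indep[X]A$. One checks that $p^{+}$ is non-algebraic (it is an independent extension of the non-algebraic type $p$; in the order and tournament cases this is transparent, since $p_{L_2}\cup q_{L_2}$ then isolates a nonempty interval, respectively a nonempty class of tournament positions).

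Next I would apply clause~\ref{moved max} of the moving-maximally hypothesis (Definition~\ref{def:move max}) to the non-algebraic type $p^{+}$ over the \emph{finite} set $XA$, obtaining a realization $y\models p^{+}$ with $y\indep[XA;g(XA)]g(y)$; unravelling this via Remark~\ref{rem:1-type induction} gives the conjunction of $yXA\indep[g(XA)]g(y)$ and $y\indep[XA]g(y)g(XA)$. Since $y\models p^{+}$ it still satisfies $y\indep[X]A$, and because $g\in\Aut(\mathbf{M}^{*})$ preserves $\indep$ (Invariance together with the fact that $g$ preserves types), I also get $g(y)\indep[g(X)]g(A)$.

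Finally I would carry out the descent. From the second conjunct, Monotonicity gives $y\indep[XA]g(y)g(X)$, and Transitivity with $y\indep[X]A$ yields $y\indep[X]g(y)g(X)$. Symmetrically, from the first conjunct Symmetry and Monotonicity give $g(y)\indep[g(X)g(A)]yX$, and Transitivity with $g(y)\indep[g(X)]g(A)$ followed by Monotonicity yields $yX\indep[g(X)]g(y)$. Together these are exactly $y\indep[X;g(X)]g(y)$, so $y$ is moved maximally by $g$ over $X$, while $y\models p^{+}$ gives $y\models p\cup q_{L_2}$. The main obstacle is precisely this last transfer: maximality over the smaller base $X$ does not follow from maximality over $XA$ by Monotonicity alone (shrinking the base is not a monotonicity move), so the whole argument hinges on having secured $y\indep[X]A$ beforehand, which is what lets Transitivity strip the superfluous parameters $A$ and $g(A)$ out of the bases. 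The one routine point to keep honest along the way is the non-algebraicity of $p^{+}$.
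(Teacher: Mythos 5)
Your proposal is correct and follows essentially the same route as the paper: obtain a realization of $p\cup q_{L_2}$ that is independent from $A$ over $X$ (the paper does this via Existence plus $(*)$, i.e.\ exactly Remark~\ref{rem:indep}), apply clause~\ref{moved max} of Definition~\ref{def:move max} over the enlarged base $XA$, and then strip $A$ and $g(A)$ back out of the bases using Transitivity and Monotonicity together with $y\indep[X]A$ and its image $g(y)\indep[g(X)]g(A)$. The descent computation you give matches the paper's step for step, and your explicit inclusion of $p_{L_2}$ in the prescribed $L_2$-type is, if anything, slightly more careful than the paper's phrasing.
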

\begin{proof}
Suppose that $g$ moves maximally and consider a non-algebraic type  $p$ over a finite set $X$ and an
$L_2$-type $q_{L_2}$ over $A\subset M$ such that  $q_{L_2}\cup p_{L_2}$ is consistent. 
Let $y$ be a realization of $p_{L_1}$ such that $y\indep[X]A$. By ($*$) we can choose a realisation $z$ of $\tp_{L_1}(y/XA)\cup q_{L_2}$. Thus $z\indep[X]A$.
Since $g$ moves maximally, there is a realization $c$ of $\tp(z/XA)$ such that $c\indep[XA;g(XA)]g(c)$.
Furthermore, since
$c\indep[XA]g(XA)g(c)$ and $c\indep[X]A$, we get $c\indep[X]g(X)g(c)$ by Transitivity.
Similarly, $g(c)\indep[g(X)]g(A)$ and $cXA\indep[g(XA)]g(c)$ imply $cX\indep[g(X)]g(c)$ 
and hence we see that
$c\indep[X;g(X)]g(c)$.

Thus $c$ realizes $p\cup q_{L_2}$ and is moved maximally by $g$ over $X$.
\end{proof}

\medskip\noindent
{\bf Notation.} For a tuple $x$ we denote the components by $x^i$.

\begin{definition}\label{def:compatible}
Let ${\bf M}^*$ be the free fusion of a homogeneous $L_1$-structure ${\bf M}_1$ carrying a stationary independence relation with an $L_2$-structure $\mathbf{M}_2$ and let $g\in G=\Aut({\bf M}^*)$ move maximally. We say that $g$ is \emph{compatible} if the following holds:

\medskip\noindent
Any finite set $X_0$ has a finite extension $X$ depending only on $X_0$ and $g$ such that the following holds:
 
\medskip\noindent 
(1) for all tuples $x, y$  such that  
$g(\tp(x/X))=\tp(y/Y)$ 
with \(Y=g(X)\),
$\tp_{L_2}(y^i/x^i)=\tp_{L_2}(g(x^i)/x^i)$
and $x\indep[X;Y]y$
there is some $a\in \Fix(XY)$ such that $g^a(x)=y$.
In this case we call $X$ \emph{full} for $g$.

\medskip\noindent 
(2) Suppose that $b\in G$, $Z$ is a finite set such that $g^b(Z)=Y$, $x, z$ are tuples such that $g^b(\tp(z/Z))= g(\tp(x/X))$,  $z\indep[Z]YX$, $x\indep[X]YZ$. Then there is  some $b'\in\Fix(YZ)$, and some finite extension $Z^+$ of $Z$ which is full for $g^{b}$ and such that with $Y^+=g^{b}(Z^+)$  we have $\tp(g^{bb'}(z)/Y)=\tp(g^{b}(z)/Y)$, 
 \[g^{bb'}(z)\indep[Y](b')^{-1}(Y^+)\quad\mbox{and}\quad b'(z)\indep[Z^+]Y^+ \] and the $L_2$-type 
\[p=\tp_{L_2}(g^{bb'}(z)/b'^{-1}(Y^+))\cup\bigcup_i \tp_{L_2}(g(x^i)/x^i)\cup\bigcup_i \tp_{L_2}(g^{bb'}(z^i)/z^i)\] is consistent.
\end{definition}

Note that our notion  $L_2$-homogeneity only refers to 2-types consisting of a single element and its image. This is suitable only for binary languages. While a similar notion can be defined for higher arities, we leave the details for future investigation.
\begin{proposition}\label{prop:compatible}
Let ${\bf M}^*$ be the free fusion of a homogeneous $L_1$-structure ${\bf M}_1$ carrying a stationary independence relation with an $L_2$-structure $M_2$ and let $g\in G=\Aut({\bf M}^*)$ move maximally. If $g$ is \emph{compatible} and $X$ is full for $g$, $Y=g(X)$, then the following holds:

For any finite set $Z$ such that $g(X)=Y=g^b(Z)$ for some $b\in G$ and all tuples $x, z$ such $\tp(g(x)/Y)=\tp(g^b(z)/Y)$ and $x\indep[X]YZ$ and $XY\indep[Z]z$ there are $a_1\in \Fix(XY), a_2\in \Fix(YZ)$ such that $g^{-ba_2}g^{a_1}(x)=z$.
\end{proposition}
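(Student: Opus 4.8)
The plan is to reduce the identity to a single equation between two conjugate images and then realize both sides at one common target, feeding the two clauses of compatibility (Definition~\ref{def:compatible}) into the two sides. Since $g^{-ba_2}=(g^{ba_2})^{-1}$ and $g^{ba_2}=(g^b)^{a_2}$, the goal $g^{-ba_2}g^{a_1}(x)=z$ is equivalent to
\[ g^{a_1}(x)=(g^b)^{a_2}(z). \]
Thus it suffices to find a tuple $w$ and $a_1\in\Fix(XY)$, $a_2\in\Fix(YZ)$ with $g^{a_1}(x)=w=(g^b)^{a_2}(z)$: the left equality will be supplied by clause~(1), i.e.\ fullness of $X$ for $g$, and the right one by clause~(2) together with fullness of the extension $Z^+$ for $g^b$.

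First I apply clause~(2) of Definition~\ref{def:compatible}. Its hypotheses hold: $g^b(Z)=Y$ and $g^b(\tp(z/Z))=g(\tp(x/X))$ (this last being the assumption $\tp(g^b(z)/Y)=\tp(g(x)/Y)$), while $z\indep[Z]YX$ follows from $XY\indep[Z]z$ by Symmetry and $x\indep[X]YZ$ is assumed. Clause~(2) then produces $b'\in\Fix(YZ)$, a finite $Z^+\supseteq Z$ that is full for $g^b$, and $Y^+=g^b(Z^+)\supseteq Y$; writing $z':=g^{bb'}(z)$, it guarantees $\tp(z'/Y)=\tp(g^b(z)/Y)=\tp(g(x)/Y)$, the independences $z'\indep[Y](b')^{-1}(Y^+)$ and $b'(z)\indep[Z^+]Y^+$, and the consistency of the combined $L_2$-type $p$.

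Next I build the common target $w$. Because $g$, and hence every conjugate of $g$, is $L_2$-homogeneous, all displacement types $\tp_{L_2}(g(x^i)/x^i)$ and $\tp_{L_2}(z'^i/z^i)$ coincide with one fixed $L_2$-type. I realize $w$ so that its $L_1$-type over $(b')^{-1}(Y^+)$ equals that of $z'$ --- this automatically forces $\tp(w/Y)=\tp(z'/Y)=\tp(g(x)/Y)$ since $Y\subseteq(b')^{-1}(Y^+)$ --- while its $L_2$-type realizes $p$, so that $\tp_{L_2}(w^i/x^i)=\tp_{L_2}(g(x^i)/x^i)$ and $\tp_{L_2}(w^i/z^i)=\tp_{L_2}(z'^i/z^i)$. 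Consistency of $p$ is exactly what makes these $L_2$-constraints jointly satisfiable; using Remark~\ref{rem:indep} and the independence calculus from the proof of Proposition~\ref{prop:move max in interval}, I also arrange $x\indep[X;Y]w$ and, after transporting by $b'^{-1}$, $b'(z)\indep[Z^+;Y^+]b'(w)$. Now fullness of $X$ for $g$ applied to $x$ and $w$ yields $a_1\in\Fix(XY)$ with $g^{a_1}(x)=w$, and fullness of $Z^+$ for $g^b$ applied to $b'(z)$ and $b'(w)$ yields $c\in\Fix(Z^+Y^+)$ with $(g^b)^c(b'(z))=b'(w)$. Setting $a_2:=cb'$, we have $a_2\in\Fix(YZ)$ (as $b'\in\Fix(YZ)$ and $c\in\Fix(Z^+Y^+)\subseteq\Fix(YZ)$) and, unwinding the conjugation,
\[ (g^b)^{a_2}(z)=b'^{-1}\bigl((g^b)^{c}(b'(z))\bigr)=b'^{-1}(b'(w))=w=g^{a_1}(x), \]
which is the desired identity.

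The main obstacle is the construction of $w$: one tuple must simultaneously meet the $g$-side demands (realizing $g(\tp(x/X))$ over $Y$ with $x\indep[X;Y]w$) and the $b'$-twisted $g^b$-side demands (agreeing with $z'$ over $(b')^{-1}(Y^+)$, with the independence that, transported by $b'$, feeds fullness of $Z^+$), all while carrying the correct displacement $L_2$-type relative to both $x$ and $z$. Reconciling these is precisely what the consistency of $p$ in clause~(2) is engineered to guarantee, and the delicate point is the bookkeeping with $b'$, so that the conjugator $c\in\Fix(Z^+Y^+)$ returned by fullness composes with $b'$ into an element of $\Fix(YZ)$ while still sending $z$ to $w$ under $g^b$.
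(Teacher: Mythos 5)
Your proposal is correct and follows essentially the same route as the paper's proof: invoke clause~(2) of compatibility to obtain $b'$, $Z^+$, $Y^+$, build a single common target realizing $\tp_{L_1}(g^{bb'}(z)/\cdot)$ together with the consistent $L_2$-type $p$ and the two independence conditions (the paper does this by taking the realization independent from $xXzY^*Z^*$ over $Y$ and upgrading via Stationarity), and then apply fullness on each side. Your application of fullness of $Z^+$ for $g^b$ to the pair $(b'(z),b'(w))$ with $a_2=cb'$ is just the $b'$-conjugated form of the paper's application of fullness of $Z^*={b'}^{-1}(Z^+)$ for $g^{bb'}$ to $(z,y)$, and yields the same element $a_2$.
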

\begin{proof}
Since $g$ is compatible,  
 there is a finite extension $Z^+$ of $Z$ which 
is full for $g^{b}$ and there is $b'\in\Fix(YZ)$  such that with 
$Y^+=g^{b}(Z^+)$,  $Y^*={b'}^{-1}(Y^+)$ and $Z^*={b'}^{-1}(Z^+)$ we have  
{$\tp(g^{bb'}(z)/Y)=\tp(g^{b}(z)/Y)$, }
 \[g^{bb'}(z)\indep_Y Y^* \quad\mbox{and}\quad 
{Y^*}\indep[Z^*]z \]  and the $L_2$-type 
\[p_{L_2}=\tp_{L_2}(g^{bb'}(z)/Y^*)\cup\bigcup_i 
\tp_{L_2}(g(x^i)/x^i)\cup\bigcup_i \tp_{L_2}(g^{bb'}(z^i)/z^i)\] is 
consistent. Note that  $Y^*$ is full for $g^{bb'}$.

Let  $y$ be a realization of ${p_{L_2}}\cup\ \tp_{L_1}(g^{bb'}(z)/Y)$ independent from $xXzY^*Z^*$
over $Y$.  Since $y\indep[Y]Y^*$ and $ \tp_{L_1}(y/Y)=\tp_{L_1}(g^{bb'}(z)/Y)$ and 
$g^{bb'}(z)\indep_Y Y^*$,  Stationarity implies 
$\tp_{L_1}(y/Y^*)=\tp_{L_1}(g^{bb'}(z)/Y^*)$. 
Since $\tp_{L_1}(g^b(z)/Y)=\tp_{L_1}(g^{bb'}(z)/Y)$, 
we have $\tp_{L_1}(y/Y))=\tp_{L_1}(g(x)/Y))$, and by the assumption
$\tp_{L_2}(y^i/x^i)=\tp_{L_2}(g(x^i)/x^i)$ and 
$\tp_{L_2}(y^i/z^i)=\tp_{L_2}(g^{bb'}(z^i)/z^i)$.
Moreover,
\[x\indep[X;Y]y\quad\mbox{and}\quad y\indep[Y^*;Z^*]z.\]
Since {$X$ is full for $g$ and $Y^*$ is full for $g^{bb'}$}, we can find the required $a_1, a_2$.
\end{proof}

%%%%%%%%%%%----------------Section 3: Main results--------------%%%%%%%%%

\section{Proof of the main result}\label{Sec:main theorem}

In this section we prove Theorem~\ref{thm:4conjugates} using the general strategy of \cite{Las92} and \cite{TenZie}.
Let \(\mathbf{M}^*\) be as in the hypothesis of Theorem~\ref{thm:4conjugates} and \(G= \Aut(\mathbf{M}^*)\). 
For \(A\subseteq M\), let \(\Fix(A)\) denote the \emph{pointwise stabiliser} of \(A\).
For \(A,B\subseteq M\), we write \(AB\) for their union \(A\cup B\).

\begin{lemma}\label{L:zug}
  Let $g\in G$ move maximally, let $X,Y,C$ be finite sets such that
  $g(X)=Y$ and ${X}\indep[Y]{C}$ and let $x$ be a tuple. Then there is some
  $a\in\Fix(XY)$ such that
  \[{g^a(x)}\indep[Y]{C}.\]
\end{lemma}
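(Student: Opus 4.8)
The plan is to trade the statement about the conjugate $g^{a}$ for the problem of realizing a single type, and then to realize that type by moving $x$ maximally over the base $XY$ (rather than over $Y$) and using the hypothesis $X\indep[Y]C$ to descend the base to $Y$.

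First I would record a reduction. Any $a\in\Fix(XY)$ fixes $X\cup Y$ pointwise, so $(a(x),a(C))$ realizes $\tp(xC/XY)$; conversely, since $\mathbf M^{*}$ is homogeneous, every realization $(x',C')$ of $\tp(xC/XY)$ equals $(a(x),a(C))$ for some $a\in\Fix(XY)$ (extend the partial isomorphism $\id_{XY}\cup\{x\mapsto x',\,C\mapsto C'\}$). For such $a$ one has $g^{a}(x)=a^{-1}(g(a(x)))=a^{-1}(g(x'))$, and since $a^{-1}$ fixes $Y$ pointwise with $a^{-1}(C')=C$, Invariance turns $g(x')\indep[Y]C'$ into $g^{a}(x)\indep[Y]C$. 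Hence it suffices to find a realization $(x',C')$ of $\tp(xC/XY)$ with $g(x')\indep[Y]C'$. The essential gain is that the fixed set $C$ has been replaced by a movable copy $C'$, which is exactly what lets the stray $a^{-1}$ be absorbed.

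Next I would produce $x'$ and $C'$. Since $g$ moves maximally, applying condition~\ref{moved max} of Definition~\ref{def:move max} to the (assumed non-algebraic) type $\tp(x/XY)$ over the finite set $XY$ gives $x'\equiv_{XY}x$ with $x'\indep[XY;g(XY)]g(x')$; taking the second conjunct and applying Monotonicity and Symmetry yields
\[
g(x')\indep[XY]x'.
\]
Then, by Existence, I would pick $C'$ realizing $\tp(C/XYx')$ with $C'\indep[XYx']g(x')$; by construction $(x',C')\models\tp(xC/XY)$, and the hypothesis $X\indep[Y]C$ transports to $X\indep[Y]C'$. Now the target independence follows purely from the axioms: from $g(x')\indep[XY]x'$ and (by Symmetry) $g(x')\indep[XYx']C'$, Transitivity and Monotonicity give $g(x')\indep[XY]C'$; combining this with $C'\indep[Y]X$ through Transitivity and Symmetry collapses the base to $Y$ and yields $g(x')\indep[Y]C'$, as required.

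The point I expect to matter most is the choice of base in the maximal–moving step. One cannot demand that $g(x')$ be independent from $x'$ over $Y$ while insisting that $x'\equiv_{XY}x$: if $x$ is not independent from $Y=g(X)$ over $X$, then neither is $x'$, and the image inherits this. The resolution is to move $x'$ maximally over the full set $XY$ — which \emph{is} compatible with fixing the $\mathcal L^{*}$-type of $x$ over $XY$ — so that one obtains only $g(x')\indep[XY]x'$ over the base $XY$, and then to use $X\indep[Y]C$ to step the base down from $XY$ to $Y$ after $C'$ has been placed generically. Getting this base-descent right, rather than the routine extension of the partial isomorphism to $a$, is the crux; the replacement of $C$ by the copy $C'$ together with the genericity $C'\indep[XYx']g(x')$ is what makes it work.
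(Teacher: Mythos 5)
Your argument is correct and is essentially the proof the paper delegates to Tent--Ziegler \cite[3.5]{TenZie}: one realizes the joint type of $x$ together with a movable copy $C'$ of $C$ over $XY$, obtaining $g(x')\indep[XY]x'$ from maximal movement over the base $XY$ and placing $C'$ generically by Existence, then descends the base from $XY$ to $Y$ via $X\indep[Y]C'$ and transfers back through an automorphism fixing $XY$ by Invariance. The only caveat, which the cited source elides as well, is the routine reduction to the case where $\tp(x/XY)$ and the transported type of $C$ over $XYx'$ are non-algebraic.
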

\begin{proof}
This follows as in \cite[3.5]{TenZie}.
\end{proof}

\begin{proposition}\label{P:zurichtung}
  Consider $g_1,\ldots,g_4\in G$ that move maximally and finite sets $X_0,\ldots,X_4$ such
  that $g_i(X_{i-1})=X_i$. Assume that $g_2$ is compatible. Then for $i=1,\ldots, 4$ there are 
  extensions $Y_i\supset X_i$  and  $a_i\in \Fix(X_{i-1}X_i)$ (with $a_2=a_3=\id$) such that
  \begin{enumerate}\upshape
  \item $Y_1$ is full for $g_2$;
  \item $g_i^{a_i}(Y_{i-1})=Y_i$,
%   \item ${Y_0}\indep[Y_1]{Y_2}$ and ${Y_2}\indep[Y_3]{Y_4}$
  \item ${Y_0}\indep[Y_1]{Y_2Y_3}$ and ${Y_1Y_2}\indep[Y_3]{Y_4}$.
  \end{enumerate}
\end{proposition}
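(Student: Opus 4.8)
The plan is to construct the extensions $Y_i$ and the correctors $a_i$ in a careful order, using the maximal-moving property and Lemma~\ref{L:zug} as the two basic tools, while respecting the constraint that $a_2=a_3=\id$. The key difficulty is that we must simultaneously arrange the two independence statements in~(3) and the image-matching conditions in~(2), and that the corrector $a_i$ for $g_i$ lives in $\Fix(X_{i-1}X_i)$, so enlarging one $Y_i$ forces a compatible enlargement of its neighbours. I would therefore treat $Y_1$ as the anchor and build outward from it.

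First I would produce $Y_1$: since $g_2$ is compatible, take $Y_1\supseteq X_1$ to be a finite set full for $g_2$, which gives~(1) immediately, and set $Y_2=g_2(Y_1)$ so that~(2) holds for $i=2$ with $a_2=\id$. Next I would handle the $g_3$ step with $a_3=\id$: I want a finite $Y_3\supseteq X_3$ realizing $\tp(g_3(Y_2)/X_3)$ together with the independence $Y_1Y_2\indep[Y_3]Y_4$; here the move-maximality of $g_3$ (via Proposition~\ref{prop:move max in interval}, which lets me prescribe an $L_2$-type while moving maximally) lets me choose a realization $Y_3$ of the appropriate type that is maximally moved, and then $Y_4:=g_3(Y_2)$ is forced. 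Because $g_3$ moves $Y_2$ maximally over the relevant base, the independence $Y_1Y_2\indep[Y_3]Y_4$ is exactly the kind of statement the maximal-moving condition $Y_2\indep[X_2;g_3(X_2)]g_3(Y_2)$ delivers after combining with the earlier independencies by Monotonicity, Transitivity, and Symmetry.

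Having fixed $Y_1,Y_2,Y_3,Y_4$ (with $Y_4$ defined once $Y_3$ is chosen), I would then use the freedom in $a_1$ and $a_4$, each only required to fix the appropriate adjacent pair, to repair~(2) for $i=1,4$ and to install the remaining independence $Y_0\indep[Y_1]Y_2Y_3$. For the $g_1$ step I want $Y_0\supseteq X_0$ with $g_1^{a_1}(Y_0)=Y_1$ and $Y_0\indep[Y_1]Y_2Y_3$; this is precisely the situation of Lemma~\ref{L:zug}, applied with $g_1$ in place of $g$, base $Y_1$, and $C=Y_2Y_3$: first choose any finite $Y_0$ with $g_1(Y_0)=Y_1$ (adjusting $X_0$ if necessary so the types match), then apply the lemma to obtain $a_1\in\Fix(X_0X_1)\subseteq\Fix$(the relevant base) moving the tuple so that $g_1^{a_1}(Y_0)\indep[Y_1]Y_2Y_3$, and rename this image as $Y_0$. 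The analogous application of Lemma~\ref{L:zug} to $g_4$ produces $a_4\in\Fix(X_3X_4)$ with $g_4^{a_4}(Y_3)=Y_4$ (after possibly enlarging $Y_4$), so that~(2) holds for $i=4$ as well.

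The main obstacle I anticipate is coherence: the corrector $a_1$ must lie in $\Fix(X_0X_1)$ rather than $\Fix(Y_0Y_1)$, and similarly $a_4\in\Fix(X_3X_4)$, so I must verify that the applications of Lemma~\ref{L:zug} can be carried out with bases contained in these smaller fixed sets, and that enlarging $Y_3$ in the $g_3$-step does not destroy the fullness of $Y_1$ for $g_2$ or the already-arranged independence $Y_0\indep[Y_1]Y_2Y_3$. I would manage this by respecting the dependency order $Y_1\to Y_2\to(Y_3,Y_4)\to Y_0$, noting that $Y_1,Y_2$ are fixed before $Y_3$ is chosen and that once $Y_3,Y_4$ are fixed the $g_1$-adjustment only modifies the image tuple over the stable base $Y_1$; the independence relations then combine cleanly by Transitivity and Symmetry exactly as in the corresponding argument of \cite{TenZie}.
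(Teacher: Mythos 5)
Your overall architecture --- anchor at a set $Y_1$ full for $g_2$, propagate forward by $g_2,g_3$, and use the freedom in $a_1,a_4$ to install the two independences --- is the same as the paper's, but two steps as written would fail. First, you take $Y_1$ to be a full extension of $X_1$ alone. The correctors must satisfy $a_1\in\Fix(X_0X_1)$ and $a_4\in\Fix(X_3X_4)$, and you also need $X_0\subseteq Y_0$ and $X_4\subseteq Y_4$. Since $a_1$ is produced as an element of $\Fix(Y_1)$ carrying the chosen independent copy $Y_0$ to $g_1^{-1}(Y_1)$, this only works if $X_0\subseteq Y_1$: otherwise the independent realization of $\tp(g_1^{-1}(Y_1)/Y_1)$ may move the $X_0$-coordinates, and then $a_1$ neither fixes $X_0$ nor is $X_0$ contained in $Y_0$. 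Symmetrically one needs $X_3X_4\subseteq Y_3$. You flag exactly this as ``the main obstacle'' but do not close it. The paper closes it by seeding \emph{before} taking the full extension: it sets $Y_2'=g_2(X_0X_1)\cup g_3^{-1}(X_3X_4)$ and $Y_1'=g_2^{-1}(Y_2')\supseteq X_0X_1$, and only then takes $Y_1\supseteq Y_1'$ full for $g_2$, so that automatically $Y_3=g_3(g_2(Y_1))\supseteq X_3X_4$. The order matters, because fullness is not automatically preserved when a full set is enlarged afterwards.

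Second, your mechanism for producing the independences is misdirected. Since $a_3=\id$ is required, $Y_3=g_3(Y_2)$ is \emph{forced}, not chosen as a realization of $\tp(g_3(Y_2)/X_3)$; a mere realization of that type would in general not equal $g_3(Y_2)$, contradicting $a_3=\id$. The freedom lies instead in $Y_4$: one chooses, by the Existence axiom, a realization $Y_4$ of $\tp(g_4(Y_3)/Y_3)$ independent from $Y_1Y_2$ over $Y_3$, and then $a_4\in\Fix(Y_3)\subseteq\Fix(X_3X_4)$ carries $Y_4$ back to $g_4(Y_3)$, giving $g_4^{a_4}(Y_3)=Y_4$; the choice of $Y_0$ is symmetric. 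Neither move-maximality of $g_3$ nor Lemma~\ref{L:zug} is the right tool here: Lemma~\ref{L:zug} makes the \emph{image} $g^a(x)$ of a prescribed tuple independent over $Y$ from $C$ (which is what Proposition~\ref{P:vier} needs later), whereas in the present proposition one is free to choose which copy of the preimage $Y_0$ (respectively image $Y_4$) to use, so plain Existence together with homogeneity suffices, and move-maximality plays no role beyond being part of the hypotheses.
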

%  \begin{comment}
\begin{proof} %\indent\par\noindent
Put $Y_2'=g_2(X_0X_1)\cup g_3\inv(X_3X_4)$ and
$Y_1'=g_2\inv(Y_2')$. Note that $Y_1'$ contains $X_0X_1$. Now let $Y_1$ be an extension of $Y_1'$ which is full for $g_2$ and put $Y_2=g_2(Y_1), Y_3=g_3(Y_2)$. Note that $Y_3$ contains $X_3X_4$. Put $Y_0'=g_1\inv(Y_1)$ and let $Y_0$ realize $\tp(Y_0'/Y_1)$ independent from $Y_2Y_3$. Let $a_1\in\Fix(Y_1)$ such that $a_1(Y_0)=Y_0'$. Then $g_1^{a_1}(Y_0)=Y_1$.
Now put $Y_4'={g_4(Y_3)}$. Let $Y_4$ realize $\tp(Y_4'/Y_3)$ independent from $Y_1Y_2$ and let $a_4\in\Fix(Y_3)$ such that $a_4(Y_4)=Y_4'$. Then $g_4^{a_4}(Y_3)=Y_4$.
\end{proof}

\begin{proposition}\label{P:vier}
  Let $g_1,\ldots,g_4\in G$  be moving maximally and  assume that 
$g_2$ is compatible and conjugate to $g_3^{-1}$. Let $Y_0,\ldots,Y_4$ be
  finite sets such that $g_i(Y_{i-1})=Y_i$ for $i=1,\ldots, 4$. Assume
  also that
 % \begin{enumerate}\upshape \item \quad  $Y_0\indep[Y_1]{Y_2}$ \quad and \quad ${Y_2}\indep[Y_3]{Y_4}$; \quad and
 \[{Y_0}\indep[Y_1]{Y_2Y_3} \quad \mbox{ and}  \quad {Y_1Y_2}\indep[Y_3]{Y_4}\]  and that $Y_1$ is full for $g_2$.
%\end{enumerate}   
Let
  $x_0$ and $x_4$ be two tuples such that $g_4g_3g_2g_1$ maps
  $\tp(x_0/Y_0)$ to $\tp(x_4/Y_4)$. Then for $i=1,\ldots, 4$, there are
  $a_i\in \Fix(Y_{i-1}Y_i)$ such that
  $g_4^{a_4}\ldots g_1^{a_1}(x_0)=x_4$.
\end{proposition}

%For the proof we need the following lemma:

\begin{proof}[Proof of Proposition \ref{P:vier}.]
Since $g_1$ and $g_4$ move maximally, using Lemma~\ref{L:zug} we find
  $a_1\in \Fix(Y_0Y_1)$ and $a_4\in \Fix(Y_3Y_4)$ such that for
  $x_1=g_1^{a_1}(x_0)\mbox{ \ and\ \ }x_3=(g_4^{-1})^{a_4}(x_4)$
  we have \[{x_1}\indep[Y_1]{Y_2Y_3}\mbox{
    \ and\ \ }{Y_1Y_2}\indep[Y_3]{x_3}.\]
    Since $g_2$ is compatible and $Y_1$ is full for $g_2$, by Proposition~\ref{prop:compatible} we find the required $a_2\in\Fix(Y_1Y_2), a_3\in\Fix(Y_2Y_3)$ such that $g_3^{a_3}g_2^{a_2}(x_1)=x_3$.
 Thus, \[g_4^{a_4}g_3^{a_3}g_2^{a_2}g_1^{a_1}(x_0)=x_4.\]

\end{proof}

\begin{proposition}\label{prop:nwd}
Let \({g_{1},\dotsc,g_{4}}\in G\) be compatible (hence moves maximally), and assume that 
$g_2$ is conjugate to $g_3^{-1}$. 
Then, for any non-empty open set \({U} \subseteq G^{4}\), there is some non-empty open set \(W \subseteq G\) such that the image \(\phi(U)\) under the map
\[
\phi\colon G^{4}\to G\colon (h_{1},\dotsc,h_{4})\mapsto  g_{4}^{h_{4}} g_{3}^{h_{3}} g_{2}^{h_{2}}g_{1}^{h_{1}}.
\]
is dense in \(W\).
% Equivalently, the preimage under \(\phi\) of a nowhere dense subset of \(G\) is nowhere dense in \(G^{4}\).
\end{proposition}
\begin{proof}
Using Proposition \ref{P:zurichtung} and compatibility of the $g_i$, the  proposition follows exactly as in \cite[2.13]{TenZie}.
\end{proof}

\begin{theorem}
\label{thm:4conjugates,II}
If \(g \in G\) is compatible, then any element of \(G\) is the product of at most eight conjugates of \(g\) and  \(g^{-1}\).
\end{theorem}
\begin{proof}
We can use Proposition \ref{prop:nwd}  and follow the proof in \cite[2.7]{TenZie}.
\end{proof}

%%%%%%%%%%%%%%-----------Section 4-------------%%%%%%%%%%%%%

\section{Obtaining compatible automorphisms}
\label{Sec:MoveMaximally}

In this section we prove Proposition~\ref{prop:ncl}
assuming that  \(\mathbf{M}\) is either the random poset,   
the Fra\"iss\'e limit of a nontrivial free amalgamation class or
the {rational} bounded  Urysohn space and that
 \(\mathbf{M}_<\) is an order  expansion of \(\mathbf{M}\). {We obtain the same results for the tournament expansion 
 ${\bf M}_\rightarrow$ of the Fra\"iss\'e limit of a nontrivial free amalgamation class and 
the rational bounded  Urysohn space.} 

We first prove that for tournament expansions any automorphism that moves maximally is automatically compatible.

\begin{proposition}\label{prop:tournament and order compatible}
If\ ${\bf M}_\to$ is  a tournament  expansion of a homogeneous $L_1$-structure ${\bf M}_1$ carrying a stationary independence relation, any $g\in G=\Aut({\bf M}_\to)$ that moves maximally is compatible.
\end{proposition}

We first prove the following lemma which uses the fact that in the random tournament the union of any two $1$-types over disjoint finite sets is consistent.

\begin{lemma}\label{lem:tournament step 1}
Let ${\bf M}_\to$ be  a tournament expansion of a homogeneous $L_1$-structure ${\bf M}_1$ carrying a stationary independence relation and let $g$ be an automorphism moving maximally. Then any finite set $X_0$ has a finite extension $X$  which is full for~$g$.
\end{lemma}

\begin{proof}
Let $g$ move maximally and  let $X_0$ be a finite set.
We will show that any extension $X$ of $X_0$ such that $g\inv(X)\cap g(X)\subset X$ is full for $g$. 
Put \[X=X_0\cup\{x\in M\colon x=g(a) \mbox{ and } g(x)=b \mbox{ for some } a,b\in X_0\}.\] Then $X$ is finite and $g\inv(X)\cap g(X)\subset X$. 
Now assume that $x, y$ are as in (1). If $x_0\subset x$ is contained in $X$, then for the corresponding coordinates $y_0$ of $y$ we have $g^a(x_0)=y_0$ for all $a\in \Fix(XY)$. Since $x\indep[X;Y]y$, we may assume that $x\cap XY=\emptyset$.
Let $x'$ be a realization of $\tp(x/XY)$ such that $g(x'), g\inv(x'), g^2(x')$ and $g^{-2}(x')$ do not intersect $X$ and let $a\in\Fix(XY)$ such that $a(x)=x'$.  Thus replacing $g$ by $g^a$ if necessary, we may assume that $g^2(x)$ and $g^{-2}(x)$ do not intersect $X$. Note that this implies that for any coordinate $x_0$ of $x$ the set $X\cup\{x_0\}$ again satisfies $g\inv(Xx_0)\cap g(Xx_0)\subset X$. This will be used below in the induction.

Now we do induction on the length of $x, y$. First suppose that $x$ and $y$ are single elements.

Since $g\inv(X)\cap g(X)\subset X$, the $L_2$-type $\tp_{L_2}(x/XY)\cup \tp_{L_2}(g\inv(y)/g\inv(XY))$ is consistent. This uses the fact that in the random tournament the union of any two types over disjoint finite sets is consistent.
By Proposition~\ref{prop:move max in interval} we can choose   a realisation $x'$ of $\tp(x/X)\cup \tp_{L_2}(x/XY)\cup \tp_{L_2}(g\inv(y)/g\inv(XY))$ which is moved maximally by
  $g$.
  Since $x'\indep[X]Y$, we have
  $\tp(x'/XY)=\tp(x/XY)$ by weak stationary independence. Choose $a_1\in \Fix(XY)$ with
  $a_1(x)=x'$. Then $g^{a_1}$ moves $x$ maximally over $X$. We have  $\tp_{L_1}(g^{a_1}(x)/xXY)=\tp_{L_1}(y/xXY)$ by Stationarity.
From the $L_2$-homogeneity of $g$, we have   $\tp_{L_2}(g^{a_1}(x)/x)=\tp_{L_2}(y/x)$ and therefore, since tournaments have a binary language, we obtain
$\tp_{L_2}(g^{a_1}(x)/xXY)=\tp_{L_2}(y/xXY)$.  Using weak stationary independence
we conclude that $\tp(g^{a_1}(x)/xXY)=\tp(y/xXY)$. Choose $a_2\in \Fix(xXY)$ with  $a_2(y)=g^{a_1}(x)$. Then $g^{a_1a_2}(x)=y$.

For the induction step assume that the claim is proved for tuples of length $n-1$ and let $x,y$ be tuples of length $n$.
Write $x, y$ as $x_0x'$ and $y_0y'$, respectively,  where $x', y'$ are tuples of length $n-1$.
By the first step of the induction we may assume that $g(x_0)=y_0$ so that $\tp(g(x')/y_0Y)=\tp(y'/y_0Y)$ and $x'\indep[x_0X;y_0Y]y'$.  Since $Xx_0$ is again full, we can apply the induction hypothesis and find
$b\in\Fix(x_0Xy_0Y)$ such that $g^b(x')=y'$.
\end{proof}

We can now prove Proposition~\ref{prop:tournament and order compatible}:
\begin{proof}[{Proof of Proposition~}\ref{prop:tournament and order compatible}]
Let  $x, z, {Z,b}$ be such that $\tp(g(x)/Y)=\tp(g^b(z)/Y)$, $x\indep[X]YZ$ and $XY\indep[Z]z$,
{$g(X)=Y$,  $g^b(Z)=Y$, and $X$ is full for $g$}. Clearly we may assume that $x$ does not intersect $X$  (and so $z$ does not intersect $Z$) and since $x\indep[X]YZ$ we see that $x$ is in fact disjoint from $XYZ$. Let $Z^+$ be the finite extension of $Z$ full for $g^b$  as above, i.e.
\[Z^+=Z\cup\{x\in M\colon x=g^b(c)\quad \mbox{and}\quad  g^b(x)=d\quad\mbox{ for some}\quad c,d\in Z\}\] and put $Y^+=g^b(Z^+)$. Then $Z^+\subset YZ$ and $Y^+\subset YZ$. Hence
$Y^+\indep_{Z^+} z$ and $g^b(z)\indep_Y Y^+$ by Monotonicity and Invariance
 and since $x, z$ are disjoint from $Y^+$. Since $g$ is $\to$-homogeneous, the type
\[\tp_\to(g^b(z)/Y^+)\cup\bigcup_i \tp_\to(g(x^i)/x^i)\cup\bigcup_i \tp_\to(g^{b}(z^i)/z^i)\] 
is consistent as required. (Note that in this case we can choose $b'=\id$.)
\end{proof}

We now turn to the case of order expansions.  We call an element $g\in \Aut(\bf{M}_<)$  {\em unboundedly increasing (resp. decreasing)} if it is increasing (resp. decreasing) and
for every $a<b$ in $\mathbf{M}_{<}$ there is
$m\in\NN$ such that $g^m(a)>b$ (resp.  $g^m(b)<a$).

The following observation will be  helpful later on:

\begin{lemma}\label{l:x+1}
If\: $\mathbf{M}_<$ is an order expansion of a homogeneous structure and $g\in G$ is unboundedly increasing, then we can identify\: ${\bf M}_<$ with the rationals $\QQ$ in such a way that $g(x)=x+1$ for all $x\in {\bf M}_<$.
\end{lemma}
\begin{proof}
Pick some $x_0\in {\bf M}_<$,  identify the interval $[x_0, g(x_0)]\subset  {\bf M}_<$ in an order preserving way with $[0,1]\subset\QQ$  and extend.
\end{proof}
Clearly, in the same way we can identify an unboundedly decreasing function on ${\bf M}_<$ with  $g(x)=x-1$ for all $x\in {\bf M}_<$.

\begin{proposition}\label{prop:order compatible}
If ${\bf M}_<$ is  an order expansion of a homogeneous $L_1$-structure ${\bf M}_1$ carrying a stationary independence relation, any unboundedly increasing $g\in G=\Aut({\bf M}_<)$ that moves maximally is compatible.
\end{proposition}

As a first step towards obtaining full extensions for finite sets we prove the following  weaker version (which nevertheless turns out to be sufficient) because the automorphism $b\in \Fix(Xg(X))$ obtained in the proof below is only an $<$-automorphism of $M$, so does not necessarily respect the $L_1$-structure on $M$:

\begin{lemma}\label{lem:order expansions} 
Let ${\bf M}_<$ be an order expansion of a homogeneous $L$-structure and let $g$ be unboundedly increasing. Then for any finite set $X_0\subset M$ there is a finite set $X$ such that for all $n$-tuples $x, y\in M$ with
$\tp(g(x)/g(X))=\tp(y/g(X))$ and $\tp_<(g(x^i)/x^i)=\tp_<(y^i/x^i), 1\leq i\leq n,$ there is some $<$-automorphism $b\in \Fix(Xg(X))$ of $M$ such that $g^{b}(x)=y$.
\end{lemma}

In fact, the proof of Proposition~\ref{prop:order compatible} will show that  the extension $X$ obtained here is in fact full for $g$.

\begin{proof}
If $X_0=\emptyset$, the conclusion holds.  So let $X_0$ be a nonempty finite set, $X_0'=X_0\cup \{g(x)\}$ for some $x\in X_0$, let $x_{\min}=\min X_0',\ x_{\max}=\max X_0'$. Let \[X=\{g^m(x)\colon x\in X_0, m\in\ZZ \mbox{ and } x_{\min}\leq g^m(x)\leq x_{\max}\}\] and put $Y=g(X)$.

We claim that the conclusion holds for $X$ and $g$.  We may identify $\bf M$ with the rationals in such a way that $X=\{0,\ldots, m\},\quad Y=g(X)=\{k,\ldots, k+m\}$ and $g(x)=x+k$ is a shift by $k$ (see Lemma~\ref{l:x+1}).  
Let $x, y$ be increasing $n$-tuples such that $\tp(g(x)/Y)=\tp(y/Y)$ and $\tp_<(x^i+k/x^i)=\tp_<(y^i/x^i)$. We now  define $b\in\Fix(XY)$ such that $g^b(x)=y$.

%We claim that the conclusion holds for $X$ and $g$.  We may identify $\bf M$ with the rationals in such a way that $X=\{0,\ldots, m\},\quad Y=g(X)=\{k,\ldots, k+m\}$ and $g(x)=x+k$ is a shift by $k$ (see Lemma~\ref{l:x+1}).  
%Let $x, y$ be increasing $n$-tuples such that $\tp(g(x)/Y)=\tp(y/Y)$ and $\tp_<(x^i+k/x^i)=\tp_<(y^i/x^i)$. We now  define $b\in\Fix(XY)$ such that $g^b(x)=y$.
%Clearly, if $b$ is defined on $[0,k]$ fixing $XY$, then we can extend the definition of $b$ to $[k,2k]$ in such a way that for all $y^i\in [k,2k]$ we have $g^b(x^i)=y^i$. We now extend the definition of $b$ to $[0,\infty)$ by extending it stepwise to adjacent intervals using the shift function $g(x)=x+k$.
 
%So suppose that $b$ is already defined on some interval $[0,m+k]$. For $x^i\in [m,m+k]$ the set of $g(b(x^i))=b(x^i)+k\in [m+k,\infty)$ is an ordered tuple of the same length and in the same order as the corresponding $y^i$, and the $y^i$ are also in $[m+k,\infty)$. 
%So defining $b(y^i)=g(b(x^i))= b(x^i)+k$ preserves the order.

{Let  $b$ on $[0,k]$ be arbitrary fixing $XY$. Then, in turn, we  extend $b$ to $[k,k+1], \ldots, [k+m-1,k+m]$ in such a way that for all $y^i\in [k+j,k+j+1]$, $j=0,\ldots, m-1$, we have $g^b(x^i)=y^i$. 
We now extend the definition of $b$ from $[0,k+m]$ to $[0,\infty)$ by extending it stepwise to adjacent intervals using the shift function $g(x)=x+k$. }
 
Let $i$ be maximal such that $x^i\in [0,m+k]$. We extend $b$ to  $[m+k, y^i]$. Assume that 
$x^{i+1},\ldots x^q\in [m+k, y^i]$. Then for $j=i+1,\ldots q$ we have an ordered tuple $y^j>y^i$ and corresponding  $g(b(x^j))>g(b(x^{i}))$. So setting $b(y^j)=g(b(x^j))=b(x^j)+k>b(x^i)+k$  for $j=i=1,\ldots q$ preserves the order. Now extend $b$ to all of $[m+k,y^q]$. In the next step, $b$ is already defined on any $x^k\in [y^i,y^q]$ and we continue by defining $b(y^k)$ exactly as before until $b$ is defined on $[0,y^n]$. Then we extend $b$ to $[0,\infty)$. 

{It is left to argue that we can extend $b$ to $(-\infty,0]$.  The argument is essentially symmetric to the one in the previous paragraph (replacing $g$ by $g\inv$ and switching the roles of $x^j$ and $y^j$). }
%It is left to argue that we can define $b$ on $(-\infty,k]$.  The argument is essentially symmetric to the previous paragraph (replacing $g$ by $g\inv$ and switching the roles of $x^j$ and $y^j$)
%  with a little more care in the first step: to define $b$ on $[0,k]$ assume $y^j\in [0,k]$ for $j=i,\ldots l$. For  $y^j\in [s,s+1]$ and $i\leq j\leq l$ pick $z^j\in [s-k,s-k+1]$ in an order preserving way and put $b(x^j)=z^j, j=i,\ldots, l$.  Set $b(y^j)=g\inv(b(x^j))=z^j-k$ for $j=i,\ldots l$ and extend $b$ to all of $[0,k]$. Then  for $j=i,\ldots l$ the partial automorphism $b$ is already defined on $x^j, j=i,\ldots, l$ in $(-\infty,0]$. Extend $b$ to $[x^i,0]$. 
%If $y^p,\ldots, y^{i-1}\in [x^i,0]$ then $b$ is defined on $y^j, j=p,\ldots i-1$. 
%As before we see that the extension $b(x^j)=g\inv(b(y^j))=b(y^j)-k$ for $j=p,\ldots i-1$ is order preserving. Now extend $b$ to all of $[x^p,0]$ and continue.
\end{proof} 
 
\begin{proof}[Proof of Proposition~\ref{prop:order compatible}] 
Identify ${\bf M}$ with the rationals in such a way that the orders agree. Put \(Y = g(X)\). By  Lemma~\ref{lem:order expansions} there is some $<$-automorphism $d\in \Fix(XY)$ such that $y=g^d(x)$. Clearly $\tp_{<}(g^d(x)/xXY)=\tp_{<}(y/xXY)$. It follows that 
$\tp_{<}(g^a(x)/xXY)=\tp_{<}(y/xXY)$ for any $a\in \Fix(XY)$ such that $|a(xy)-d(xy)|= \max\{|a(z)-d(z)|\colon z\in xy\}$ is sufficiently small.
Choose a realisation $x'$ of $\tp(x/X)\cup \tp_{<}(x/XY)$ with $|x'-d(x)|$  sufficiently small which is moved maximally by $g$ over $X$.  
Then as in the corresponding proof  for tournaments we have
  $\tp(x'/XY)=\tp(x/XY)$. Let $a_1\in \Fix(XY)$ with
  $a_1(x)=x'$ and $|a_1(xy)-d(xy)|$  sufficiently small (which is possible by property $(*)$). Thus we have
  $\tp_{<}(g^{a_1}(x)/xXY)=\tp_{<}(y/xXY)$ and  $tp_{L_1}(g^{a_1}(x)/xXY ) = tp_{L_1}(y/xXY )$ by Stationarity.
  Choose $a_2\in \Fix(xXY)$ with  $a_2(y)=g^{a_1}(x)$. Then $g^{a_1a_2}(x)=y$.
%Now we finish as in the proof for tournaments.

Now let  $X$ be as given by Lemma~\ref{lem:order expansions}, $Y=g(X)$, $x, z, Z$, {and $b$}  be such that 
{$g^b(Z)=Y$}, $x\indep[X]{YZ}$ 
and ${XY}\indep[Z]z$,
$\tp(g(x)/Y)=\tp(g^b(z)/Y)$. In particular, $X$ is full for $g$. Let $Z^+$ be a finite  extension for 
$Z$ which is full for $g^b$ defined as above, i.e.
\[Z^+=\{(g^b)^m(x)\colon x\in Z, m\in\ZZ \mbox{ and } z_{\min}\leq 
(g^b)^m(x)\leq z_{\max}\}\] where $z_{\min}, z_{\max}$ are the minimal and the maximal element of $Z$, respectively,
  and put $Y^+=g^b(Z^+)$. 
First note that 
 \[{p_<}=\tp_<(g^b(z)/Y^+)\cup \bigcup_i 
\tp_<(g(x^i)/x^i)\cup\bigcup_i \tp_<(g^{b}(z^i)/z^i)\]
is consistent.
{Indeed, observe first that $y_{\min}\leq Y^+\setminus Y \leq y_{\max}$, where $y_{\min}$ and
$y_{\max}$ are the minimal and maximal element of $Y$. Let also $x_{\min}$ and
$x_{\max}$ be the minimal and maximal element of $X$.}
 If $x^i<x_{\min}$, then $z^i<z_{\min}$ and $g(x^i), g^b(z^i)<y_{\min}$. So we can  find a corresponding coordinate $y^i$ such that $x^i, z^i<y^i<y_{\min}$. 
 Similarly, if $x^i>x_{\max}$, then {$z^i>z_{\max}$ and $g(x^i), g^b(z^i)>y_{\max}$ and we can find a corresponding coordinate $y^i$ such that $y_{\max}<x^i, z^i<y^i$. }
Finally, since $X$ is full for $g$, if $x_{\min}<x^i<x_{\max}$,  %and therefore also $z_{\min}<x^i<z_{\max}$,
 we have  $x^i<w$ for every  $w$ which realizes $\tp_<(g^b(z^i)/Y^+) $.
Therefore  $y^i=g^b(z^i)$, which clearly  satisfies $\tp_<(g^b(z^i)/Y^+) $ and so $z^i<y^i$, works.

% for each coordinate $x^i$ such that  $x_{\min}<x^i<x_{\max}$ there is some $w\in Y$ such that $x^i<w<g(x^i)$. So any choice of the  corresponding coordinate of $y$ will satisfy $x^i<y^i$ and we can clearly also satisfy $z^i<y^i$. If $x^i<x_{\min}$, then $z^i<z_{\min}$ and $g(x^i), g^b(z^i)<y_{\min}$. So we can  find a corresponding coordinate $y^i$ such that $x^i, z^i<y^i<y_{\min}$. Similarly, if $x^i>x_{\max}$, then $g(x^i)>y_{\max}$ and we can find a corresponding coordinate $y^i$ such that $y_{\max}<x^i<y^i$.  

Note that for any $L^*$-automorphism $b'$ sufficiently close to the identity (in the sense of $<$) the type  
\[ p_{b'} = \tp_<(g^{bb'}(z)/{b'}^{-1}(Y^+))\cup \bigcup_i 
\tp_<(g(x^i)/x^i)\cup\bigcup_i \tp_<(g^{bb'}(z^i)/z^i)\]
is also consistent.

Now by Existence and ($*$), let $c$ be a realization of  $\tp_{L_1}(z/YZ)$, sufficiently close to $z$ and such that $ c\indep_{YZ} Y^+Z^+$ and let $b'\in \Fix(YZ)$  with  $b'(z)=c$ be sufficiently close to the identity such that $p_{b'}$ is consistent. {Since $g^b(Z)=Y$ and $\tp_{L_1}(z/Z)=\tp_{L_1}(b'(z)/Z)$, we have 
$\tp_{L_1}(g^b(z)/Y)=\tp_{L_1}(g^b(b'(z))/Y)$, and therefore $\tp(g^{bb'}(z)/Y)=\tp(g^{b}(z)/Y)$.
We claim that $g^{bb'}(z)\indep_Y {b'}^{-1}(Y^+)$, i.e. $c\indep_{Z} Z^+$, and     $c\indep_{Z^+} Y^+$.
Indeed, we have $ c\indep_{YZ} Z^+$
 and $z\indep_Z Y$, and therefore $c\indep_Z Y$. By Transitivity we obtain $c\indep_{Z} Z^+$.}
 Since $c\indep_{YZ} Y^+Z^+$ and $c\indep_Z Y$, by Transitivity we obtain $c\indep_{Z} Y^+Z^+$. Hence by Monotonicity we get $c\indep_{Z^+} Y^+$, as we wanted.
\end{proof}

To prove Proposition~\ref{prop:ncl} for ${\bf M}_<$, we construct an unboundedly increasing automorphism $g\in\langle h\rangle^G$ moving maximally starting from an arbitrary $h\in G$. This is done in four
 steps.
\begin{enumerate-(1)}
\item construct a fixed point free $h_1=[h,f_1]\in\langle h\rangle^G$;
\item construct a  strictly increasing $h_2=[h_1,f_2]\in\langle h_1\rangle^G$;
\item construct an  unboundedly increasing $h_3\in\langle h_2\rangle^G$;
\item construct an unboundedly 
increasing $h_3=[h_2,f_3]\in\langle h_1\rangle^G$ moving maximally.
\end{enumerate-(1)}

We take care of each of these steps in the Lemmas ~\ref{l:no fixed point}, \ref{l:strictly increasing}, \ref{l:unbounded} and \ref{lem:mov max}.

\begin{lemma}{\rm (cf. \cite[3.4(ii)]{MacTen})}\label{lem:orbit}
No element of  \(G \setminus \{\id\}\) fixes any interval pointwise.
\end{lemma}
\begin{proof} Suppose otherwise. Choose $h\in G\setminus\{1\}$ and
 \(a,c,d \in M\)  such that $h$ fixes $[c,d]$ pointwise and $a\neq h(a)$. We first claim that   there exists a finite set  $Y\subset M$  such that $\tp_L(a/Y)\neq \tp_L(h(a)/Y)$ or, equivalently, $\tp_L(Ya)\neq \tp(Yh(a))$. Suppose not. Then by homogeneity of $M$  we can construct an $L$-automorphism of $M$ that fixes $M\setminus\{a, h(a)\}$ pointwise and swaps $a$ and $h(a)$, contradicting \cite[2.10]{MacTen}.

 By property ($*$), $\tp(Y/ah(a))$ is realized by some finite set $B\subseteq (c,d)$. Since $h$ fixes $B$ pointwise, we see that $\tp(h(a)/B) = \tp(a/B)$, a contradiction.

If ${\bf M}_<$ is the ordered rational bounded  Urysohn space, then  the $L_1$-type $p_{L_1}(x)$ expressing $d(x,a)=1\wedge d(x,h(a))=1/2$  is consistent. Using ($*$)  we can pick a realization $b$ of $p_{L_1}(x)$ in the interval $(c,d)$, contradicting the assumption that $h$ is an isometry fixing $b$ and taking $a$ to $h(a)$.

If ${\bf M}_<$ is the ordered random poset, assume $a\npo h(a)$. Using ($*$) we can pick some $x\in (c,d)$ with $x\po a$ and $x\npo h(a)$. Then $\tp(h(a)/x) \neq \tp(a/x)$, contradicting the assumption that $h$ fixes $x$.
\end{proof}

The previous lemma can easily  be adapted to many other fusion structures. However, it seems difficult to give a uniform proof extending the result from \cite{MacTen} to arbitrary fusion structures.

\begin{corollary}\label{cor:fixpointisolate}  
A nontrivial element of $G$ does not fix the set of realizations~$D=p({\bf M}_<)$ of any 
non-algebraic  $\mathcal{L}^{*}$-type  $p$ over a finite set $A$.
\end{corollary}
\begin{proof}
We can assume that $p=q\cup \{a<x<b\}$ for some $a,b\in {\bf M}\cup\{-\infty,\infty\}$ where $q$ is a complete $L$-type over $A$. Let $D'=q(\mathbf{M})$, so $D=D'\cap (a,b)$. By condition \((\ast)\) $D'\cap (a,b)$ is dense in $(a,b)$, so if $h$  fixes $D$ pointwise, then $h$ is the identity on $(a,b)$ and hence  $h=\id$ by Lemma~\ref{lem:orbit}.
\end{proof}

\begin{lemma}\label{l:no fixed point} For any $h\in G$
there is some $g=[h,f]\in \langle h\rangle^G$ which is fixed point free.
\end{lemma}
\begin{proof}
This follows from Corollary~\ref{cor:fixpointisolate} as in \cite[2.11]{MacTen}.  Note that we can use condition ($*$) to ensure that $f$ preserves the ordering.
\end{proof}

\begin{lemma}\label{l:strictly increasing}
If \(h\in G\) has no fixed point, then
there is some $g=[h,f]\in\langle h\rangle^G$ which is  strictly increasing.
\end{lemma}
\begin{proof}
Write \(\mathbf{M}_{<}\) as an ordered union of intervals $J_i, i\in I$, on which $h$ is either strictly increasing or strictly decreasing. By further subdividing we may assume that on each interval $J_i$ the automorphism
$h$ is either unboundedly increasing or unboundedly decreasing.

If $h$ is unboundedly increasing on $J_i$, then as in Lemma~\ref{l:x+1} we identify $J_i$ with (a copy of) $\QQ$
in such a way that 
we have $h(x)=x+1$ for $x\in J_i$. Similarly, if $h$ is unboundedly decreasing on $J_j$, we identify $J_j$ with (a copy of) $\QQ$
in such a way that 
we have $h(x)=x-1$ for $x\in J_j$.

Fix a positive $\epsilon<\frac{1}{2}$.
 We construct an element
$f\in G$ by back and forth  leaving each $J_i$ invariant  and such  that if $h$ is increasing on 
 \(J_{i}\), then we choose $f(x)\in[\frac{x}{2},\frac{x}{2}+\epsilon)$ for $x\in J_i$, or, equivalently, $ f^{-1} (x)\in (2x-2\epsilon,2x]$.  Similarly, if $h$ is decreasing
on 
\(J_{i}\) we choose
 $ f (x)\in[2x,2x+\epsilon)$ for $x\in J_i$, or, equivalently,
$ f^{-1} (x)\in (\frac{x}{2}-\frac{\epsilon}{2},\frac{x}{2}]$.
Clearly, such an $f\in G$ can be easily be constructed by back-and-forth thanks to condition $(*)$.
We then have $h(f(x))>f(h(x))$, and so  $[h,f](x)>x$ for any $x\in \mathbf{M}_{<}$.
\end{proof}

%\begin{comment}
\begin{lemma}\label{l:unbounded}
If $h\in G$ is strictly increasing, there is an unboundedly increasing $g\in \langle h\rangle^G$.
\end{lemma}
\begin{proof}
Since $h$ is strictly increasing, we may identify \(\mathbf{M}_<\) with a  {union} of copies of \(Q\) such that on 
each copy we have \(h(x)=x+1\).

First assume that the  {union} of copies of \(\QQ\)
is infinite in both directions.
Divide each copy of $\QQ$ into an ordered  {union} of two copies \(Q_{1}\cup Q_{2}\) (each \(Q_{i}\) being again isomorphic to \(\QQ\)). 
Using assumption ($*$), we define $f\in G$  by back-and forth such that each 
half-copy of \(Q	\) is moved to the next one above: so \(f(Q_1)=Q_2\) and 
\(f(Q_2)=Q_1\) in the next copy of \(\QQ\).
If there is a first copy $Q_1 \cup Q_2$ we define $f$ in
such a way that \(f(Q_1)=Q_1\cup Q_2\).  And if there is a last copy  $Q=Q_1 
\cup Q_2$ we define $f$ in such a way that $f(Q_1\cup Q_2)=Q_2$.
Then \(g=h\cdot h^f\) is unboundedly increasing.
\end{proof}
%\end{comment}

\begin{lemma}\label{lem:mov max} If\/ ${\bf M}_<$ is as in Theorem~\ref{thm:main} (1) or (2) 
and \(h\in G\) is unboundedly increasing,
there is some  unboundedly increasing \(g\in \langle h\rangle^G\) that moves maximally.
\end{lemma}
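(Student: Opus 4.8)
The plan is to produce $g$ as a product of conjugates of $h^{\pm 1}$, concretely as a commutator $g=[h,f]=h^{-1}f^{-1}hf=h^{-1}h^{f}$ for an auxiliary $f\in G$ to be built by back-and-forth; since $h^{-1},h^{f}\in\langle h\rangle^{G}$ this guarantees $g\in\langle h\rangle^{G}$. Using Lemma~\ref{l:x+1} I identify $\mathbf{M}_{<}$ with $\QQ$ so that $h(x)=x+1$, whence $g(x)=f^{-1}(f(x)+1)-1$. As a composition of order-preserving maps $g$ is order-preserving, so it is automatically $L_{2}$-homogeneous and condition~(i) of Definition~\ref{def:move max} comes for free; everything reduces to arranging that $g$ is unboundedly increasing and satisfies the maximal-moving condition~(ii). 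The argument is uniform for cases (1) and (2), as both carry a stationary independence relation and satisfy property~$(\ast)$.

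For~(ii) I build $f$ as an increasing union of finite partial $\mathcal{L}^{*}$-isomorphisms, weaving together three families of requirements. The first are the usual extension requirements forcing $f$ to be a total automorphism. The second force $f$ to \emph{compress}, sending ever-longer order-intervals onto unit-length range-intervals; this makes $f^{-1}(f(x)+1)$ lie far to the right of $x$, so that $g(x)=f^{-1}(f(x)+1)-1$ grows without bound under iteration and $g$ is unboundedly increasing. The third family, indexed by pairs $(X,p)$ with $X$ finite and $p$ a non-algebraic type over $X$, each produce a realization of $p$ moved maximally by $g$: once enough of $f$ has been committed to determine $Y:=g(X)$, I use the Existence axiom of the stationary independence relation on $\mathbf{M}_{1}$, transported to $\mathbf{M}^{*}$ via Proposition~\ref{prop:weak stationary independence}, to pick a realization $x$ of $p$ with $x\indep[X]Y$, and then commit the relevant part of $f$ so that $g(x)\indep[Y]xX$.

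Granting such an $x$, the maximal-moving condition $x\indep[X;g(X)]g(x)$ is a formal consequence of the independence calculus. In the notation of Remark~\ref{rem:1-type induction} it splits into $xX\indep[Y]g(x)$ and $x\indep[X]g(x)Y$. The first is $g(x)\indep[Y]xX$ read through Symmetry. For the second, Monotonicity applied to $g(x)\indep[Y]xX$ gives $g(x)\indep[XY]x$, hence $x\indep[XY]g(x)$ by Symmetry, and combining this with $x\indep[X]Y$ via Transitivity yields $x\indep[X]g(x)Y$. Multi-element types $p$ are handled by inducting on the length of the tuple, moving one coordinate maximally at a time over the previously fixed ones, exactly as in the tournament case of Lemma~\ref{lem:tournament step 1}.

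The technical heart — and the step I expect to be the main obstacle — is reconciling the demand $g(x)\indep[Y]xX$ with the rigidity of $f$. From $g(x)=f^{-1}(f(x)+1)-1$ one reads off that $f$ must send the pair $(x,\,h(g(x)))$ to the $h$-adjacent pair $(f(x),\,h(f(x)))$ in the range, so the $L_{1}$-type of $(x,h(g(x)))$ is \emph{not} freely prescribable but must coincide with that of an $h$-adjacent pair. The crucial slack is that this constraint falls on $(x,h(g(x)))$ while the independence we want concerns $(x,g(x))$; exploiting this, together with the abundance of $h$-adjacent pairs guaranteed by $h$ being unboundedly increasing, I can locate, far to the right, a suitable $h$-adjacent pair in the range and pull it back along the partial $f$ (then apply $h^{-1}$) to define $g(x)$, arranging via the Existence axiom and property~$(\ast)$ that the resulting $g(x)$ is $L_{1}$-independent from $xX$ over $Y$ and correctly ordered, all while keeping the partial map an $\mathcal{L}^{*}$-isomorphism. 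Showing that at every stage this can be carried out simultaneously with the compression and totality requirements — the delicate joint bookkeeping of order, $L_{1}$-independence, and $h$-adjacency — is where the real work of the lemma lies.
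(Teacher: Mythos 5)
Your outline for case (1) is essentially the paper's argument: the paper also takes $g=[h,f]$, identifies $\mathbf{M}_<$ with $\QQ$ so that $h(x)=x+1$, imposes the quantitative requirement $f(x)\in(\frac{x}{2}-\epsilon,\frac{x}{2}]$ (your ``compression'') so that $[h,f](x)>x+\delta$ for a uniform $\delta>0$, and discharges the type requirements exactly by the pull-back of $h$-adjacent pairs that you describe, deferring the joint bookkeeping to \cite[5.1]{TenZie}. Your independence-calculus reduction of $x\indep[X;g(X)]g(x)$ to the two conditions $g(x)\indep[Y]xX$ and $x\indep[X]Y$ is also correct. (One small slip: being order-preserving does not by itself make $g$ $L_2$-homogeneous; for that you need $g(x)>x$ for \emph{every} $x$, which you do obtain, but only from the compression requirement, not ``for free''. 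Note also that you need a uniform positive lower bound on $g(x)-x$ on each bounded interval to get \emph{unboundedly} increasing rather than merely $g(x)>x$.)

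The genuine gap is the claim that the argument is uniform in cases (1) and (2). For the ordered bounded rational Urysohn space a single commutator provably cannot work. There $x\indep[]g(x)$ means that $d(x,g(x))$ attains the diameter, so moving maximally, applied to a non-algebraic type over $X=\emptyset$, forces some $x$ with $d(x,[h,f](x))$ maximal; but since $f$ and $h$ are isometries, $d(x,[h,f](x))\le d(x,h^{-1}(x))+d(f(x),h(f(x)))\le 2\sup_z d(z,h(z))$. An unboundedly increasing $h$ can have $\sup_z d(z,h(z))$ as small as you like, because the order and the metric are fused freely; in that case no commutator $[h,f]$ moves any point maximally over $\emptyset$. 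This is exactly why the paper splits off Case II and imports the machinery of \cite{TenZiebounded}: first a product of conjugates of $h$ producing some $b$ with $d(b,h_1(b))=1$, then \emph{iterated} commutators to reach an automorphism moving almost maximally, and only then the back-and-forth upgrade to moving maximally --- all while using property $(*)$ to keep every intermediate automorphism unboundedly increasing in the order. Your proposal needs this entire extra layer for case (2); as written, the single-commutator construction cannot deliver the lemma there.
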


\begin{proof}
Since \(h\) is unboundedly increasing, we may identify  ${\bf M}_<$ with $\QQ$ in such a way that we have \(h(x) = x+1\) for all $x\in{\bf M}_<$.
 Fix a positive \(\epsilon<\frac{1}{2}\).

\medskip 
\noindent 
{\bf Case I:}  If ${\bf M}_<$ is the ordered Fra\"iss\'e limit of a free amalgamation class, we define \(f\) by a back and forth construction like in \cite[5.1]{TenZie}, with the additional requirement that  we have \[
 f(x)\in \Big (\frac{x}{2} - \epsilon, \frac{x}{2} \Big] \mbox{ for each } x\in {\bf M}_<.
 \]
  Since this implies that \(f^{-1}(x)\in [2x, 2x+ {2}\epsilon)\), it follows that
\[
[h,f](x) > x +\delta
\]
with \(\delta = 2 (\frac{1}{2}-\epsilon) > 0\).
Hence the commutator \([h,f]\) will again be   {unboundedly} increasing.

So suppose that \(f'\) is already defined on a finite set \(A\) and let
\(p\) be a type over a finite set \(X\). It suffices to show that \(f'\) has an extension \(f\) such that \([h, f]\) moves \(p\) maximally.

By possibly extending \(f'\) we can assume that \([h,f']\) is defined on \(X\) and that \(f'^{-1}hf'(X)\subseteq A\).
Now pick a realisation \(a\) of \(p\) independent  from \(X' = A\cup h(X)\cup [h,f'](X)\) over \(X\) and  such that $h(a)\neq a$ which is possible by Corollary~\ref{cor:fixpointisolate}. Let \(B= f'(A)\) and pick a realisation \(b\) of \(f'(\tp(a/A))\) in such a way that
\(b\indep_{B}h^{-1}(B)\) and \( b\in (\frac{a}{2} - \epsilon, \frac{a}{2}]\).
Extend \(f'\) to \(Aa\) by setting \(f'(a)=b\).
Next pick a realisation \(c\) of \(f'^{-1}(\tp(h(b)/Bb))\) such that
\(c\) is independent from \(h(a)h(X)\) over \(Aa\), and \(c\in[2h(b), 2h(b)+ {2}\epsilon)\).
Extend \(f'\) by setting \(f'(c) = h(b)\).
Since  weak stationary independence agrees with stationary independence on subsets of ${\bf M}_<$ the proof of Lemma~5.1 in \cite{TenZie} shows that \(a\indep[X;h(X)][h,f'](a)\). 

\medskip
\noindent
{\bf Case II:} Now suppose that ${\bf M}_<$ is the ordered bounded Urysohn space. If there is no $a\in {\bf M}_<$ with $d(a,h(a))=1$, then as in \cite[1.3]{TenZiebounded}  and using condition ($*$) we construct some  {unboundedly} increasing $h_1\in\langle h\rangle^G$ as a product of conjugates of $h$ such that there is some $b\in {\bf M}_<$ with $d(b,h_1(b))=1$: let $0<\epsilon<1$ and $a\in{\bf M}_<$ such that $d(a,h(a))=\epsilon$. Assume $h(a)>a$ (the other case being similar). Pick some $b\in (a,\infty)$
with $d(a,b)=1$. Let $k>1$ be such that $k\epsilon\geq 1$. Put $a_0=a, a_k=b$ and,   using ($*$),  pick  $a_i\in (a_{i-1},b)$ such that $d(a_{i-1},a_i)=\epsilon, i=1,\ldots k$.
Let $f_i\in G$ with $f_i(a_{i-1},a_i)=(a_i,a_{i+1})$ and put $h_1=h^{f_1}\cdot\ldots\cdot h^{f_k}$. 

In the same way we can adapt \cite[2.4]{TenZiebounded} to construct iterated commutators $[h,f]$ using ($*$) to make sure that $f$ preserves  the order and additionally satisfies 
 \[
 f(x)\in \Big (\frac{x}{2} - \epsilon, \frac{x}{2} \Big]
 \] for each \(x\in M\). Thus, after each step, the commutator will again be unboundedly increasing and we end up as in \cite[2.5]{TenZiebounded} with an automorphism $g'\in \langle h\rangle^G$ which is unboundedly increasing and moves \emph{almost maximally}, i.e every nonalgebraic type $p$ over a finite set $X$ has a realization $a$ such that $a\indep[X]g'(a)$. An application of the previous argument as in \cite[5.3]{TenZie} then yields the required $g\in\langle h\rangle^G$ which is  {unboundedly} increasing and moves  maximally.
This concludes the proof in the case of the ordered bounded Urysohn space and thus of Proposition~\ref{prop:ncl}.
\end{proof}

To complete the proof of Theorem~\ref{thm:main} in the case of order expansions, it is left to prove the following two propositions:

\begin{proposition}\label{prop:ordered random poset}
If\/ ${\bf M}^*$ is the ordered random poset, then $G =\Aut({\bf M}^*)$ is simple.
\end{proposition}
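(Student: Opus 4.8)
The plan is to reduce simplicity of $G=\Aut(\mathbf{M}^*)$ for the ordered random poset to the machinery already developed, namely Theorem~\ref{thm:4conjugates,II} (equivalently Theorem~\ref{thm:4conjugates}), which says that if some $g\in G$ is compatible, then every element of $G$ is a product of at most eight conjugates of $g$ and $g^{-1}$. Granting this, simplicity follows by a standard argument: given any nontrivial normal subgroup $N\trianglelefteq G$, pick $\id\neq h\in N$; if we can produce a compatible $g\in\langle h\rangle^G\subseteq N$, then by Theorem~\ref{thm:4conjugates,II} every element of $G$ is a product of conjugates of $g^{\pm1}$, all of which lie in $N$, whence $N=G$. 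So the entire content of the proposition is the analogue of Proposition~\ref{prop:ncl} for the ordered random poset: \emph{starting from an arbitrary $\id\neq h\in G$, find a compatible $g\in\langle h\rangle^G$.}

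\textbf{Following the four-step scheme.} The natural route mirrors the chain of Lemmas~\ref{l:no fixed point}, \ref{l:strictly increasing}, \ref{l:unbounded}, \ref{lem:mov max} used for cases (1) and (2). First I would apply Lemma~\ref{l:no fixed point} to pass from $h$ to a fixed-point-free $h_1=[h,f_1]\in\langle h\rangle^G$; this already works for the poset since Lemma~\ref{lem:orbit} and Corollary~\ref{cor:fixpointisolate} were proved for the ordered random poset directly. Then Lemma~\ref{l:strictly increasing} and Lemma~\ref{l:unbounded} are purely order-theoretic and apply verbatim, producing an unboundedly increasing $h_3\in\langle h\rangle^G$. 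The one genuinely new step is the poset analogue of Lemma~\ref{lem:mov max}: starting from an unboundedly increasing $h_3$, construct an unboundedly increasing $g\in\langle h\rangle^G$ that \emph{moves maximally}. Here I would identify $\mathbf{M}$ with $\QQ$ so that $h_3(x)=x+1$ (Lemma~\ref{l:x+1}), fix $0<\epsilon<\tfrac12$, and build $f$ by back-and-forth with the displacement constraint $f(x)\in(\tfrac{x}{2}-\epsilon,\tfrac{x}{2}]$ so that each commutator stays unboundedly increasing, while arranging that for every non-algebraic type $p$ over a finite $X$ there is a realization $a$ with $a\indep[X;g(X)]g(a)$. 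Recall that the random poset carries the stationary independence relation described in the background section, so the moving-maximally construction of \cite[5.1]{TenZie} can be run, using property $(\ast)$ to keep $f$ order-preserving.

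\textbf{The main obstacle} is that, unlike cases (1) and (2), compatibility is \emph{not} automatic for order expansions of the random poset: Proposition~\ref{prop:order compatible} establishes compatibility of an unboundedly increasing maximally-moving $g$ only under the hypothesis that $\mathbf{M}_1$ carries a stationary independence relation, which the random poset does. Thus the real work is to verify that the $g$ produced above satisfies the compatibility clauses (1) and (2) of Definition~\ref{def:compatible}. I expect this to go through by invoking Proposition~\ref{prop:order compatible} directly: its proof constructs full extensions via Lemma~\ref{lem:order expansions} and checks consistency of the relevant $L_2$-types using only the density property $(\ast)$ and the independence calculus, both available for the poset. The subtle point to double-check is that the poset's independence relation interacts correctly with the order-theoretic interval arguments in the proof of Proposition~\ref{prop:order compatible}—specifically that the consistency of the combined $<$-type $p_<$ and the transitivity manipulations yielding $c\indep[Z^+]Y^+$ remain valid when $\indep$ is the poset independence relation rather than free-amalgamation independence. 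Once this is confirmed, $g$ is compatible, lies in $\langle h\rangle^G$, and Theorem~\ref{thm:4conjugates,II} delivers simplicity.
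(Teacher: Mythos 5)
Your reduction and your first three steps (fixed-point-free, strictly increasing, unboundedly increasing in $<$) coincide with the paper's, but from that point on the paper takes a completely different route: it does \emph{not} extend the moving-maximally/compatibility machinery to the ordered random poset. Instead it follows Glass--McCleary--Rubin: starting from an element unboundedly increasing in $<$, it produces $g\in\langle h\rangle^G$ that is unboundedly increasing in the sense of the partial order $\po$ as well (using $(*)$ to preserve unbounded increase in $<$ at each step), observes that any two elements unboundedly increasing in both senses are conjugate, and adapts \cite[3.4]{GMR} to write an arbitrary $f\in G$ as $g_1^{-1}g_2$ with each $g_i$ unboundedly increasing in both senses. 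Simplicity follows at once, with no appeal to Theorem~\ref{thm:4conjugates,II} or to compatibility.

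The step you yourself single out as ``the one genuinely new step'' is exactly where your proposal has a genuine gap. The back-and-forth of \cite[5.1]{TenZie}, which Lemma~\ref{lem:mov max} invokes, is a construction for \emph{free} amalgamation classes; finite posets do not amalgamate freely (transitivity of $\po$ forces new comparabilities between the two sides), and the poset's stationary independence relation is the ``witnessed comparability'' relation rather than the free one. So the claim that ``the moving-maximally construction of \cite[5.1]{TenZie} can be run'' is an assertion, not an argument: verifying $a\indep[X;h(X)][h,f'](a)$ at each stage of the back-and-forth would require a poset-specific analysis that neither the paper nor \cite{TenZie} supplies, and this is precisely why Proposition~\ref{prop:ncl} and Lemma~\ref{lem:mov max} are stated only for cases (1) and (2) of Theorem~\ref{thm:main}, with the ordered random poset carved out for separate treatment. (Your observation that Proposition~\ref{prop:order compatible} would apply once such a $g$ exists is fair, since the random poset does carry a stationary independence relation; the missing piece is producing an unboundedly increasing, maximally moving element of $\langle h\rangle^G$ in the first place.) Unless you supply that construction, you should argue as the paper does, via \cite{GMR}.
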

\begin{proof}
Let $h\in G$.
By Lemmas~\ref{l:no fixed point},~\ref{l:strictly increasing} and \ref{l:unbounded} we may assume that $h$ is unboundedly increasing in the sense of $<$.
Now we can follow the steps of \cite[Sec. 3]{GMR} to construct some $g\in \langle h\rangle^G$ which is unboundedly increasing in the sense of the partial order $\po$. Using 
property ($*$)  we can make sure that at each step the result is again unboundedly increasing in the sense of the order $<$. It is easy to see using ($*$) that any two elements of $G$ that are unboundedly increasing both in the sense of $\po$ and in the sense of $<$ are conjugate. Adapting the proof of \cite[3.4]{GMR} using ($*$), any element $f$ of $G$ can be written as a product $f=g_1^{-1}g_2$ with $g_1, g_2$ unboundedly increasing in the sense of $\po$ and $<$. Thus $G$
is simple also in this case.
\end{proof}

\begin{proposition}\label{prop:mov max tournament} 
Assume that \(\mathbf{M}\) is the Fra\"iss\'e limit of a nontrivial free amalgamation class 
 {such that $G$ acts transitively on $M$} or
the bounded  rational  Urysohn space and 
 \(\mathbf{M}_\rightarrow\) is a tournament expansion of \(\mathbf{M}\). For any $h\in G$, there is some $g\in\langle h\rangle^G$ moving maximally. 
\end{proposition}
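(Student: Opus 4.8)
The plan is to mirror the four-step argument for the order expansion (Lemmas~\ref{l:no fixed point}--\ref{lem:mov max}), but to produce a $\to$-homogeneous automorphism in place of an unboundedly increasing one. Since tournaments carry no analogue of unboundedness, and---crucially---no global rigidity that has to be secured in advance, the chain collapses to two steps. First I would pass from an arbitrary $h\neq\id$ to a fixed-point-free $h_1=[h,f_1]\in\langle h\rangle^G$; then, by a single back-and-forth, I would construct $f$ so that $g=[h_1,f]=h_1^{-1}f^{-1}h_1f$ lies in $\langle h_1\rangle^G\subseteq\langle h\rangle^G$, is $\to$-homogeneous, and moves maximally. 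By Proposition~\ref{prop:tournament and order compatible} such a $g$ is then automatically compatible, so this suffices for the proposition (and, with Proposition~\ref{prop:ncl}, for the tournament part of Theorem~\ref{thm:main}).

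For the first step I need the tournament analogue of Corollary~\ref{cor:fixpointisolate}: no $\phi\in G\setminus\{\id\}$ fixes pointwise the set $D=p(\mathbf{M}^*)$ of realizations of a non-algebraic $\mathcal{L}^*$-type $p$. Where Lemma~\ref{lem:orbit} exploited intervals, I would use property $(\ast)$ directly: if $\phi(a)\neq a$, say $a\to\phi(a)$, then $(\ast)$ yields a realization $d$ of $p$ with $d\to a$ and $\phi(a)\to d$, and since $\phi$ preserves $\to$, fixing $d$ would turn $d\to a$ into $d\to\phi(a)$, contradicting $\phi(a)\to d$. Transitivity of $\Aut(\mathbf{M})$ enters here, as the homogeneity of the dense order did in the ordered case, to guarantee that the $L_1$-reduct $p_{L_1}$ is non-algebraic so that $(\ast)$ applies; the same fact is what keeps $(\ast)$ available throughout the main construction. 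Granting this, the fixed-point-free commutator $h_1=[h,f_1]$ is built as in \cite[2.11]{MacTen} (cf.\ Lemma~\ref{l:no fixed point}).

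For the main step I would run the back-and-forth for $f$ producing $g=[h_1,f]$. The maximal-moving requirement concerns only the $L_1$-reduct, where the weakly stationary independence relation restricts to the stationary independence relation of $\mathbf{M}_1$; hence the independence bookkeeping that yields a realization $x$ with $x\indep[X;g(X)]g(x)$ is identical to Case~I of Lemma~\ref{lem:mov max} (following \cite[5.1]{TenZie}) for a free amalgamation class and to Case~II (following \cite{TenZiebounded}) for the bounded rational Urysohn space. What is genuinely new is enforcing $\to$-homogeneity. Tracing the commutator, $g(x)=h_1^{-1}f^{-1}h_1f(x)=h_1^{-1}(c)$ where $c=f^{-1}(h_1f(x))$, and since $h_1$ preserves $\to$ we have $x\to g(x)$ if and only if $h_1(x)\to c$. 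Now $c$ is exactly the point introduced with $f(c)=h_1f(x)$ (the analogue of the point $c$ in the proof of Lemma~\ref{lem:mov max}), and $h_1(x)$ is already present when $c$ is placed; so $(\ast)$ lets me realize $c$ with $h_1(x)\to c$, and symmetrically, in the back direction, realize preimages so that $g^{-1}(y)\to y$. Imposing this prescription uniformly, for every point processed by the back-and-forth, makes $g$ fixed-point-free with $x\to g(x)$ for all $x$, i.e.\ $\to$-homogeneous.

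The hard part is the interaction of these two demands inside a single back-and-forth: the independence choices that force $L_1$-maximal moving and the tournament choices that force $\to$-homogeneity must never conflict. This is precisely where property $(\ast)$ of the free fusion does the work. By Remark~\ref{rem:indep}, any $L_1$-type over a finite set can be realized independently from a prescribed finite set while simultaneously carrying an arbitrary $L_2$-type over a disjoint finite set---the consistency of such unions being the tournament fact already used in Lemma~\ref{lem:tournament step 1}. Thus the $\to$-relations are chosen freely on top of the $L_1$-independence data, and the two constraints decouple. This decoupling is also why the rigid intermediate steps of the ordered argument (Lemmas~\ref{l:strictly increasing} and~\ref{l:unbounded}) are unnecessary here: unlike the dense linear order, the random tournament imposes no global coherence that must be arranged before the final construction, so $\to$-homogeneity can be secured locally as the back-and-forth proceeds.
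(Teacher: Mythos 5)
Your proposal matches the paper's proof in structure: both reduce the four-step ordered argument to two steps, first producing a fixed-point-free element of $\langle h\rangle^G$ from the fact that no nontrivial automorphism fixes pointwise the realization set of a non-algebraic type, and then running the back-and-forth of \cite[5.1]{TenZie} (resp.\ \cite{TenZiebounded}) for the $L_1$-independence data while invoking property \((\ast)\) at each stage to additionally impose $a\rightarrow [h,f](a)$, exactly as the paper does. The one detail to tidy in your tournament-specific argument for the first step is the case where $a$ or $\phi(a)$ lies in the domain $A$ of $p$, since there the relations $d\rightarrow a$ and $\phi(a)\rightarrow d$ cannot be prescribed freely by \((\ast)\); this is repaired by first choosing $a$ with $\phi(a)\neq a$ and $a,\phi(a)\notin A$, which is possible because nontrivial automorphisms of these structures move infinitely many points.
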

\begin{proof}
As in Lemma~\ref{lem:orbit}, Corollary~\ref{cor:fixpointisolate} and \cite[3.4(ii)]{MacTen} we see that a nontrivial element of $G$ does not fix pointwise the set of realizations of any nonalgebraic type over a finite set. Thus as in Lemma~\ref{l:no fixed point} we can replace $h$ by some fixed point free element in $\langle h \rangle^G$. Then as in Lemma~\ref{lem:mov max}  we can follow the construction of \cite[5.1]{TenZie} and \cite{TenZiebounded} respectively to construct an element $f\in G$ such that  $g=[h,f]$ moves maximally in the sense of the stationary independence relation of \({\bf M}_{1}\). By the axioms of the random tournament and property $(*)$, we can construct $f$ in such a way to ensure that $a\rightarrow [h,f](a)$ for all $a\in M$. Thus, $g=[h,f]$ moves maximally.
% in the sense of a free fusion structure and is compatible by Proposition~\ref{prop:tournament and order compatible}.
\end{proof}

\begin{remark}%\label{rem:mov max}
An indiscernible set ${\bf M}$ carries a stationary independence relation by setting $A$ to be independent from $B$ over $C$ if $A\cap B\subset C$. Any fixed point free permutation of ${\bf M}$   moves maximally with respect to this stationary independence relation (see \cite{TenZie}). Then ${\bf M}_<$ is isomorphic to $(\QQ,<)$, and we conclude that given any unboundedly increasing automorphism $g$ of $(\QQ,<)$, any $h\in\Aut(\QQ)$ can be written as a product of at most eight conjugates of $g$ and $g^{-1}$.

Similarly, in this case the tournament expansion ${\bf M}_\rightarrow$ is just the countable random tournament and we conclude that for any automorphism $g$ such that $a\rightarrow g(a)$ any $h\in G$ can be written as a product of at most eight conjugates of $g$ and $g^{-1}$.
\end{remark}

Hence Theorem~\ref{thm:4conjugates}  applies to any unboundedly increasing automorphism of $(\QQ,<)$ and to any automorphism $g$ of the random tournament such that $a\rightarrow g(a)$ for all $a$. We thus obtain as a corollary the following result which yields more specific information about Higman's theorem on $\Aut(\QQ,<)$ and the result from \cite{MacTen} on the random tournament $(\TT, \to)$:
\begin{corollary}
If $g$ is an unboundedly increasing automorphism of~$(\QQ,<)$ or an automorphism of the random tournament $(\TT,\to)$  such that $a\rightarrow g(a)$ for all $a\in M$, then any element of $\Aut(\QQ, <)$ or of $\Aut(\TT,\to)$ is the product of at most eight conjugates of $g$ and  $g^{-1}$.
\end{corollary}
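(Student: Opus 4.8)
The plan is to realize each of the two cases as an instance of the free fusion appearing in Theorem~\ref{thm:4conjugates}, taking $\mathbf{M}_1$ to be the trivial (indiscernible) structure on a countable set $M$. As recorded in the preceding remark, $\mathbf{M}_1$ carries the stationary independence relation given by $A\indep[C]B$ if and only if $A\cap B\subseteq C$. For the order case I would set $\mathbf{M}_2=(\QQ,<)$, so that the order expansion $\mathbf{M}_<$ is just $(\QQ,<)$ and $G=\Aut(\QQ,<)$; for the tournament case I would set $\mathbf{M}_2=(\TT,\to)$, so that the tournament expansion is the random tournament and $G=\Aut(\TT,\to)$. In both situations it then suffices, by Theorem~\ref{thm:4conjugates}, to check that the given $g$ moves maximally and is compatible.

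First I would verify that $g$ moves maximally. Since the $L_1$-structure is trivial, every non-algebraic $L_1$-type over a finite set is determined purely by equalities, so the maximal-moving condition \ref{eq:MovedMaximally}, namely $x\indep[X;g(X)]g(x)$, reduces to a statement about intersections that is witnessed automatically as soon as $g$ is fixed point free. In the order case an unboundedly increasing $g$ is in particular strictly increasing, hence $<$-homogeneous and without fixed points; in the tournament case the hypothesis $a\to g(a)$ for all $a\in M$ makes $g$ precisely $\to$-homogeneous and again fixed point free. Thus $g$, and likewise $g^{-1}$, moves maximally in both cases.

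Next I would establish compatibility. For the random tournament this is immediate from Proposition~\ref{prop:tournament and order compatible}, which asserts that in any tournament expansion of a structure carrying a stationary independence relation every maximally moving automorphism is compatible; the trivial base structure qualifies. For $(\QQ,<)$ the analogous statement is Proposition~\ref{prop:order compatible}: any unboundedly increasing maximally moving automorphism of an order expansion is compatible. Applying these two propositions with $\mathbf{M}_1$ the trivial structure yields compatibility of $g$ in each case. With $g$ compatible and moving maximally, Theorem~\ref{thm:4conjugates} then gives at once that every element of $G$ is a product of at most eight conjugates of $g$ and $g^{-1}$, which is the assertion.

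I do not expect a genuine obstacle here, since the corollary is just the specialization of Theorem~\ref{thm:4conjugates} to the trivial base structure. The only point requiring care is the bookkeeping that the indiscernible set really satisfies all hypotheses, that is, that it carries a stationary independence relation and that on it the maximal-moving condition collapses to fixed-point-freeness, which is exactly what the remark preceding the statement records; everything else is then a direct appeal to the two compatibility propositions and the main theorem.
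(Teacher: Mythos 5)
Your argument is correct and is essentially the paper's own: the paper likewise realizes $(\QQ,<)$ and $(\TT,\to)$ as the order and tournament expansions of an indiscernible set with the trivial stationary independence relation $A\indep[C]B\iff A\cap B\subseteq C$, notes that fixed-point-free ($L_2$-homogeneous) automorphisms move maximally there, and invokes Propositions~\ref{prop:order compatible} and~\ref{prop:tournament and order compatible} together with Theorem~\ref{thm:4conjugates}. No substantive difference from the paper's route.
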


\medskip\noindent
{\bf Acknowledgement} The authors would like to thank Binyamin Riahi for alerting
 us to an error in a previous version of the article. We also thank Silke Mei\ss ner for pointing out a number of inaccuracies, in particular in Remark~\ref{rem:ordered Fraisse} where we need the assumption that the Fra\"iss\'e class has disjoint amalgamation.

\end{document}